\newtheorem{thm}{Theorem}[section]
\newtheorem{lem}[thm]{Lemma}
\newtheorem{pro}[thm]{Proposition}
\newtheorem{cor}[thm]{Corollary}
\newcommand{\Sect}[1]{\setcounter{equation}{0}\section{#1}\quad}
\newcommand{\dint}{\displaystyle \int}
\newcommand{\Proof}{\noindent {\bf Proof.}\ }
\newcommand{\Qed}{\quad\hbox{\rule[-2pt]{3pt}{6pt}}\par\bigskip}
\newcommand{\R}{\mathbb{R}}
\newcommand{\eps}{\varepsilon}
\newcommand{\tu}{\tilde u}
\newcommand{\N}{{\mathbb{N}}}
\newcommand{\T}{{\cal T}}
\theoremstyle{definition}
\begin{document}

\title{Isolated singularities in the heat equation behaving \\
like fractional Brownian motions}

\author{
Mikihiro Fujii \\
Graduate School of Mathematics,
Kyushu University, \\
Fukuoka 819-0395, Japan
\\  \ \\
 Izumi Okada \\
Faculty of Mathematics,
Kyushu University,\\
Fukuoka 819-0395, Japan
\\  \ \\
and \\
\\
 Eiji Yanagida  \\
Department of Mathematics,
Tokyo Institute of Technology,\\
Meguro-ku, Tokyo 152-8551, Japan
}

\date{}

\maketitle


\newpage

\begin{abstract}
We consider solutions of the linear heat equation in $\R^N$
with isolated singularities.
It is assumed that the position  of a singular point depends on time
and  is H\"older continuous with the  exponent $\alpha \in (0,1)$.
We show that any isolated singularity is removable if it is weaker than a certain order
depending on $\alpha$.  We also show the optimality of the
removability condition by showing the existence of a solution
with a nonremovable singularity.
These results are applied to the case where
the singular point behaves like a
fractional Brownian motion with the Hurst exponent $H \in (0,1/2] $.
It turns out that $H=1/N$ is critical.
\end{abstract}

\

\noindent
{\bf Key words:}  removability,  isolated singularity, heat equation, fractional
Brownian motion.

\

\noindent
{\bf Abbreviated title:} Singularities in the heat equation

\

\noindent
{\bf AMS Subject Classification 2010:}
35K05, 
35B33, 
35A21, 
60G22  

\

%

\ \\  \

\newpage

\Sect{Introduction}\label{sect:intro}
In the field of partial differential equations, it is an important
and interesting problem
to study singularities of solutions.
For instance, let us consider the Laplace equation
$\Delta u=0$ in $\Omega \setminus \{\xi_0 \}$,
where $\Omega$ is a domain in $\R^N$ and $\xi_0 \in \Omega$.
We say that a singularity of $u$ at the point $x=\xi_0$ is removable
if there exists a classical solution $\tu$ of the
Laplace equation in $\Omega$ such that
$\tu \equiv u$ in $\Omega \setminus \{\xi_0 \}$.
It is well known that the singularity of a solution $u$ at $x=\xi_0$
is removable  if $u(x)=o(|x|^{2-N})$ for $N \ge 3$
and $u(x)=o(\log(1/ |x|))$ for $N=2$ as $x \to \xi_0$.
This condition is optimal, because the fundamental solution of
the Laplace equation is given by
\[
\Psi(x) :=\left\{
\begin{aligned}
&C_N |x|^{2-N}&&\text{if } N\geq3,
\\
&   C_2  \log(1/ |x|) && \text{if } N=2,
\end{aligned} \right.
\]
where $C_N>0$ denotes a constant depending on $N$.
Similar results have been obtained for nonlinear elliptic equations as well,
see, e.g., \cite{BV,GS,PLL,V,Vmono} and references cited therein.
On the other hand, for the heat equation
$u_t=\Delta u$ in $( \R^N \setminus \{\xi_0 \} ) \times (0,T)$
with $N\geq 3$ and $T>0$,
Hsu \cite{Hsu} and Hui \cite{Hui} proved that the singular point $\xi_0$ is
removable if and only if $|u(x,t)|=o(\Psi(x-\xi_0)) $ as  $x\to \xi_0$
  locally uniformly on some time interval.
This result is optimal  since  $\Psi$ is a stationary solution of  the heat equation.

In this paper, we study the heat equation in the case where the position of
a singular point depends on time, which  is formulated as
\begin{equation} \label{eq:main}
	u_t = \Delta u,  \qquad  (x,t) \in D,
\end{equation}
where
\[
D:= \left\{ (x,t) \in \R^{N+1} \,:\,
		 x\in \R^N \setminus\{\xi(t)\}, \; t\in(0,T)\right\}.
\]
Throughout of this paper, we assume that $N\geq3$
and  the singular point $\xi:\R \rightarrow \R^N$  is continuous.
Our interest for (\ref{eq:main}) is in the  removability  of a singularity
and the existence of a positive singular solution,
especially when the singular point behaves like a
fractional Brownian motion with the Hurst exponent $H \in (0,1/2]$.
See \cite{MD, YK, YK2} for related results concerning
the fractional Brownian motion with the Hurst exponent $H \in (0,1/N]$.

For a solution $u$ of (\ref{eq:main}),  the singularity at $x=\xi(t)$ is said to be removable
if there exists a function $\tu$ which satisfies the heat equation
in $\R^N \times(0,T)$ in the classical sense and  $\tu \equiv u$ on $D$.
Takahashi-Yanagida~\cite{TakahashiY} studied the case where
 $\xi(t)$ is  locally $1/2$-H\"older continuous
in $t\in\R$ and showed that if  a solution $u$ of \eqref{eq:main} satisfies
$
u(x,t)=o(\Psi(x-\xi(t))) $ as  $ x\to  \xi(t)$
 locally uniformly in $t\in(0,T)$,
then the singularity of $u$ at $x=\xi(t)$ is removable.
They  also showed the existence of
solutions with nonremovable time-dependent singularities
that are asymptotically radially symmetric
(see \cite{Hirata,SY,TY2} for related results about time-dependent singularities in
nonlinear parabolic equations).
Kan-Takahashi~\cite{KT} studied the case where the limit
\[
 \lim_{t \uparrow T}  \frac{ \xi(T) - \xi(t) }{(T - t)^{\alpha}}
\]
exists for some $\alpha \in (0,1/2)$, and showed the existence of a solution
whose limiting profile loses the asymptotic radial symmetry.

Our first objective is to extend the removability condition to the case where
$\xi(t)$ is $\alpha$-H\"older continuous with $\alpha \in (0,1)$.
Taking the result of \cite{KT} into account, we state our result
by using the integral of $u(x,t)$ on a ball
\[
B^r(t):= \{x \in \R^N : |x - \xi(t)| \leq r \}.
\]

\begin{thm}[Removability]\label{th:remove}
Let $N \geq 3$ and $\alpha \in (1/N,1)$.
Suppose that $\xi(t)$ is  locally $\alpha$-H\"older continuous
in $t\in(0,T)$.    If a positive solution $u$ of \eqref{eq:main} satisfies
\[
\int_{B^r(t)} u(x,t)dx =
\left \{ \begin{aligned}
&  o(r^2) &&  \mbox{ for }  1/2 \leq \alpha <1 ,
\\
&o(r^{1/\alpha}) &&   \mbox{ for }  1/N<\alpha < 1/2
 \end{aligned} \right.
\]
as $r\to 0$ locally uniformly in $t\in(0,T)$,
then the singularity of $u$ at $x=\xi(t)$ is removable.
\end{thm}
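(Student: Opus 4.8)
The plan is to show that the growth hypothesis forces $u$ to be a \emph{weak} (distributional) solution of the heat equation across the singular curve $\Gamma:=\{(\xi(t),t):t\in(0,T)\}$, and then to invoke the smoothing property of the heat operator to produce the removable extension $\tu$. As a first step, note that the hypothesis makes $u$ integrable up to $\Gamma$: for any compact $K\subset(0,T)$ and small $\delta>0$ one has $\int_K\int_{B^\delta(t)}u\,dx\,dt\le |K|\,\sup_{t\in K}\int_{B^\delta(t)}u\,dx<\infty$, while $u$ is bounded on the complement of a tube about $\Gamma$; hence $u\in L^1_{\loc}(\R^N\times(0,T))$. It therefore suffices to prove that $\int_0^T\!\int_{\R^N}u(\varphi_t+\Delta\varphi)\,dx\,dt=0$ for every $\varphi\in C_c^\infty(\R^N\times(0,T))$, which is the weak form of $u_t=\Delta u$.

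To exploit that $u$ solves the equation \emph{classically} off $\Gamma$, I introduce a smooth cutoff $\zeta_\eps$ that vanishes in a tube of radius $\sim\eps$ about $\Gamma$ and equals $1$ outside a slightly larger tube. Testing the classical equation against the admissible function $\varphi\zeta_\eps\in C_c^\infty(D)$ and integrating by parts (no boundary terms appear, since $\varphi$ is compactly supported in time and $\zeta_\eps$ vanishes near $\Gamma$) yields the commutator identity
\begin{equation*}
\int_0^T\!\!\int_{\R^N}u\,\zeta_\eps(\varphi_t+\Delta\varphi)\,dx\,dt
=-\int_0^T\!\!\int_{\R^N}u\big(\varphi\,\partial_t\zeta_\eps+2\nabla\varphi\cdot\nabla\zeta_\eps+\varphi\,\Delta\zeta_\eps\big)\,dx\,dt.
\end{equation*}
As $\eps\to0$ the left-hand side converges to $\int u(\varphi_t+\Delta\varphi)$ by dominated convergence, so the whole matter reduces to showing that the three error terms on the right, all supported in a shell $\{c_0\eps\le|x-\xi(t)|\le C\eps\}\subset B^{C\eps}(t)$, tend to $0$.

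The subtle point, and the origin of the dichotomy in the statement, is that $\xi$ is only H\"older continuous, so $\partial_t\zeta_\eps$ is not available if $\zeta_\eps$ is built from $\xi$ directly; this is the main obstacle. I resolve it by centering the cutoff at a mollification $\xi_h=\xi*\rho_h$ and \emph{coupling the scales} by $h=\eps^{1/\alpha}$. The H\"older bound gives $|\xi_h-\xi|\lesssim h^\alpha\sim\eps$, so after adjusting constants $\zeta_\eps$ still vanishes near the true curve $\Gamma$ and $u\zeta_\eps$ is smooth, and, crucially, $|\xi_h'|\lesssim h^{\alpha-1}\sim\eps^{1-1/\alpha}$. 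Consequently $|\partial_t\zeta_\eps|\lesssim\eps^{-1}|\xi_h'|\sim\eps^{-1/\alpha}$, while $|\nabla\zeta_\eps|\lesssim\eps^{-1}$ and $|\Delta\zeta_\eps|\lesssim\eps^{-2}$. Bounding $\int_{\mathrm{shell}}u\,dx\le\int_{B^{C\eps}(t)}u\,dx$ (here the positivity of $u$ is used) and inserting the growth hypothesis uniformly over $\supp\varphi$ gives error terms of sizes $\eps^{-2}\int_{B^{C\eps}}u$, $\eps^{-1}\int_{B^{C\eps}}u$, and $\eps^{-1/\alpha}\int_{B^{C\eps}}u$, respectively.

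For $\alpha\ge1/2$ the parabolic (second-order) term $\Delta\zeta_\eps$ is the binding one: with $\int_{B^{C\eps}}u=o(\eps^2)$ it is $o(1)$, and since $1/\alpha\le2$ the motion term is $o(\eps^{2-1/\alpha})=o(1)$ as well, which is exactly why the threshold there is $o(r^2)$. For $1/N<\alpha<1/2$ the motion term $\partial_t\zeta_\eps$ dominates, and it is precisely the choice $\int_{B^{C\eps}}u=o(\eps^{1/\alpha})$ that makes $\eps^{-1/\alpha}\int_{B^{C\eps}}u=o(1)$; the remaining two terms are then $o(\eps^{1/\alpha-1})$ and $o(\eps^{1/\alpha-2})$, both vanishing because $1/\alpha>2$. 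In all cases the right-hand side tends to $0$, so $\int u(\varphi_t+\Delta\varphi)=0$ for every test function, i.e. $u$ solves the heat equation weakly on $\R^N\times(0,T)$. By hypoellipticity of the heat operator there is a smooth $\tu$ with $\tu=u$ almost everywhere; since both are continuous on $D$ they coincide there, and $\tu$ solves $u_t=\Delta u$ classically on all of $\R^N\times(0,T)$, which is the asserted removability. I expect the scale-matching $h=\eps^{1/\alpha}$ used to control $\partial_t\zeta_\eps$ to be the crux of the argument.
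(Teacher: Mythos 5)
Your proposal is correct and follows essentially the same route as the paper: pass to the weak formulation via a cutoff vanishing in a shrinking tube about the singular curve, bound the commutator terms (of sizes $\delta^{-2}$, $\delta^{-1}$, $\delta^{-1/\alpha}$ times $\int_{B^{C\delta}(t)}u\,dx$) using the integral hypothesis with exponent $\max\{2,1/\alpha\}$, and conclude by hypoellipticity (the paper's Weyl lemma). The only difference is that you construct the cutoff explicitly by mollifying $\xi$ at the matched scale $h=\eps^{1/\alpha}$, whereas the paper imports exactly those derivative bounds as Lemma~\ref{lem:cut}, quoted from Takahashi--Yanagida.
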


If a solution $u$ is positive and bounded in a neighborhood of $\xi(t)$,
then the solution satisfies
\[
c_1 r^N \leq \int_{B^r(t)} u(x,t) dx \leq c_2 r^N, \qquad r \in (0,1)
\]
with some constant $c_1,c_2>0$.
This implies that  for $0<\alpha \leq 1/N$,
we cannot judge the removability from the integral on $B^r(t)$.
In fact, we will see that for every $ 0<\alpha \leq 1/N$,
there exists a solution with a singularity at
$\xi(t)$ that is bounded  at $\xi(t)$.  Therefore, to study the
removability, we need to examine more precise profile of the solution
around the singular point.

Our idea of the proof of Theorem~\ref{th:remove}
 is to extend the method in \cite{TakahashiY} that is
 based on a construction of a suitable cut-off function.
In order to construct the desired function, the H\"older exponent
$\alpha=1/2$ is critical  in some sense.
Using the cut-off function, we can show under the condition
 in Theorem~\ref{th:remove} that the solution satisfies (\ref{eq:main})
in a weak sense.  Then the parabolic regularity implies that the solution
satisfies the heat equation on $\R^N$ in the classical sense.

Next, we show that the condition in Theorem~\ref{th:remove} is optimal.
To show the optimality, we consider the  initial value problem
\begin{equation} \label{eq:atdelta}
\left \{
\begin{aligned}
	&u_t - \Delta u = \delta(x - \xi(t)),  &\quad &
		 x \in \R^N , \ t  > 0,
\\
	&u(x,0)=u_0(x), &&  x \in \R^N ,
\end{aligned} \right.
\end{equation}
where $\delta(\cdot)$ denotes a Dirac measure concentrated
at the origin $0 \in \R^N$, and the initial value $u_0(x)$
is assumed to be continuous, positive and bounded on $\R^N$.
By solving this problem, we show the existence of a solution of (\ref{eq:atdelta})
with a nonremovable singularity at $\xi(t)$.
In the case of $\alpha>1/2$,
 it was shown in \cite{TakahashiY} that if $\xi(t)$ is $\alpha$-H\"older continuous,
(\ref{eq:atdelta}) has a solution satisfying
\begin{equation} \label{eq:heatasym}
 u(x,t) =  C_N |x - \xi(t)|^{-N+2} + o( |x - \xi(t)|^{-N+2}) \qquad (x \to \xi(t)).
\end{equation}

In this paper, we are particularly interested in the case
where $\xi(t)$ is H\"older continuous with the exponent $\alpha\le 1/2$.
For example, any sample  path  of the fractional Brownian
motion with the Hurst exponent $H \in (0,1/2]$ is ($H - \eps$)-H\"older continuous
  almost everywhere in $t$,
where $\eps>0$ is an arbitrarily small constant.
We note that $H=1/2$ corresponds to the ordinary Brownian motion.
Therefore, it becomes an interesting question to ask
what happens  to the solution of (\ref{eq:atdelta}) depending on $H$.

In order to investigate the behavior of solutions to (\ref{eq:atdelta}) with such  $\xi(t)$,
we first consider the case where the singularity $\xi(t)$ satisfies more general conditions.
Let $s_0 \in (0,T]$ be arbitrarily fixed. For small $r>0$, we define a function
\[
 \sigma(r) := \inf  \{ s\in [0,s_0]:   |\xi(T -s) - \xi(T)| > r \} .
\]
(We set $\sigma(r) = s_0$ if  $|\xi(T -s) - \xi(T)| \le r$ for all $ s\in [0,s_0]$.)
We also define a set
\begin{equation} \label{eq:bigtau}
\T (r):= \{  s \in [0, s_0]:    |\xi(T - s) - \xi(T)|  \leq  r \}
\end{equation}
and  a function
\begin{equation} \label{eq:smalltau}
\tau(r) := \mu (\T (r)),
\end{equation}
where $\mu (\cdot)$ stands for the  Lebesgue measure on $\R$, that is
\begin{equation*}
\tau(r) = \int_{0}^{s_0} 1_{\{ |\xi(T - s) - \xi(T)|  \leq  r \}} ds.
\end{equation*}
Clearly, $\sigma(r) \leq \tau(r)$ for all $r>0$.
In the field of probability theory,
the functions $\sigma(r)$ and $\tau(r)$ are called the first exit time
and the occupation time, respectively.


The following result gives a
lower bound of solutions of (\ref{eq:atdelta}).

\begin{thm}
\label{th:lower}
Let $N \geq 3$.  Assume that $\sigma(r)=O(r^2)$ as $r \to 0$.
Then for any $\theta \in (0,1)$, there exist constants $C>0$
and $R>0$
such that the solution of \eqref{eq:atdelta} satisfies
\[
\int_{B^r(T)}u(x,T) dx  \geq
C \big \{ \sigma(\theta r) +  r^{N}  \big \}
\]
for all $r\in (0,R)$.
\end{thm}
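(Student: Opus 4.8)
The plan is to start from the Duhamel (variation-of-constants) representation of the solution of \eqref{eq:atdelta}. Writing $G(x,t):=(4\pi t)^{-N/2}\exp(-|x|^2/(4t))$ for the Gauss kernel, the solution is
\[
u(x,t)=\int_{\R^N} G(x-y,t)\,u_0(y)\,dy+\int_0^t G(x-\xi(s),\,t-s)\,ds=:u_1(x,t)+u_2(x,t),
\]
both terms being nonnegative. The strategy is to extract the $r^N$ contribution from the initial-data part $u_1$ and the $\sigma(\theta r)$ contribution from the source part $u_2$, and then add the two estimates.

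For $u_1$, I would use only the positivity of the data: since $u_0$ is continuous and strictly positive, $u_1(\cdot,T)$ is continuous and strictly positive, hence bounded below by some $c_1>0$ on the compact ball $\overline{B^R(T)}$ for a fixed small $R$. Integrating over $B^r(T)$ with $r\le R$ yields $\int_{B^r(T)}u_1(x,T)\,dx\ge c_1 r^N$ (after absorbing the volume of the unit ball into $c_1$). For $u_2$, by Tonelli's theorem and the substitution $s\mapsto T-s$, and then discarding all but the interval $s\in[0,\sigma(\theta r))$,
\[
\int_{B^r(T)}u_2(x,T)\,dx=\int_0^{T}\!\int_{B^r(T)}G(x-\xi(T-s),s)\,dx\,ds\ge\int_0^{\sigma(\theta r)}\!\int_{|y|\le(1-\theta)r}G(y,s)\,dy\,ds.
\]
The last inequality uses the definition of the first exit time: for $s<\sigma(\theta r)$ one has $|\xi(T-s)-\xi(T)|\le\theta r$, so the ball of radius $(1-\theta)r$ centered at $\xi(T-s)$ is contained in $B^r(T)$, and then $y=x-\xi(T-s)$ is substituted.

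The crux is a uniform lower bound on the inner kernel mass over this range of $s$. Rescaling $y=\sqrt{s}\,z$ gives
\[
\int_{|y|\le(1-\theta)r}G(y,s)\,dy=\int_{|z|\le(1-\theta)r/\sqrt{s}}(4\pi)^{-N/2}e^{-|z|^2/4}\,dz,
\]
which is increasing in the ratio $(1-\theta)r/\sqrt{s}$. Here is exactly where the hypothesis $\sigma(r)=O(r^2)$ is used: it provides $K>0$ with $\sigma(\theta r)\le K r^2$ for small $r$, so that for every $s\le\sigma(\theta r)$ we have $(1-\theta)r/\sqrt{s}\ge(1-\theta)/\sqrt{K}$, a fixed positive number. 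Hence the inner integral is at least a constant $c_2>0$ depending only on $N,\theta,K$, and $\int_{B^r(T)}u_2(x,T)\,dx\ge c_2\,\sigma(\theta r)$. Adding the two bounds and setting $C:=\min\{c_1,c_2\}$ finishes the argument (note $\sigma(\theta r)\le s_0\le T$, so the time integration stays in range).

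I expect the main obstacle to be precisely this matching of scales: one must verify that the $O(r^2)$ growth of the first exit time is compatible with the diffusive scaling $\rho\sim\sqrt{s}$ of the Gauss kernel, keeping $(1-\theta)r/\sqrt{s}$ bounded away from $0$. This is also what makes the hypothesis natural, since if $\sigma$ grew faster than $r^2$ the source would diffuse out of $B^r(T)$ before contributing and the lower bound would degrade. The remaining ingredients — justifying the mild representation and applying Tonelli to the nonnegative integrand — are routine.
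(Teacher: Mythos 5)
Your argument is correct and follows essentially the same route as the paper: same Duhamel decomposition, the $r^N$ term from the positive initial data, and the $\sigma(\theta r)$ term by restricting the source integral to $s<\sigma(\theta r)$ and using $\sigma(\theta r)\le Kr^2$ to keep a ball of radius comparable to $\sqrt{s}$ inside $B^r(T)$, so that the Gaussian mass there is bounded below by a constant. The only cosmetic difference is that the paper works with a sub-ball of radius $\delta\sqrt{s}$ about $\eta(s)$ while you use the ball of radius $(1-\theta)r$, which contains it under the same hypothesis.
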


We note that if $\xi(t)$ moves not less slowly
than $(T-t)^{1/2}$ as $t \uparrow T$, then the assumption on $\sigma$
in Theorem~\ref{th:lower}  is satisfied.
More precisely, $\sigma(r)=O(r^2)$ as $r \to 0$ if and only if
\[
\displaystyle \liminf_{s \downarrow 0}
  s^{-1/2}\sup_{T-s<t<T}|\xi(T) - \xi(t)| \in (0,\infty].
\]

Next, we give a pointwise upper bound of solutions of (\ref{eq:atdelta}).

\begin{thm}
 \label{th:upper}
Let $N \geq 3$.
Then there exist constants $C>0$ and $R>0$  such that
the solution $u$ of \eqref{eq:atdelta} satisfies
\[
\int_{B^r(T)} u(x,T) dx  \leq
 C  \left[  \min \left \{  r^N \int_{r}^\infty \tau(l)l^{-N-1}dl ,r^2\right \} + r^{N} \right]
\]
for all  $ r \in [0,R]$.
\end{thm}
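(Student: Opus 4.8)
The plan is to start from the mild (Duhamel) representation of the solution of \eqref{eq:atdelta}. Writing $G(x,t):=(4\pi t)^{-N/2}\exp(-|x|^2/(4t))$ for the Gaussian heat kernel, the solution is $u(x,t)=\int_{\R^N}G(x-y,t)u_0(y)\,dy+\int_0^t G(x-\xi(s),t-s)\,ds$. Since $u_0$ is bounded, the first (initial-data) term is bounded by $\sup u_0$, so after integrating over $B^r(T)$ it contributes at most $Cr^N$, accounting for the additive $r^N$ in the statement. It then remains to estimate the potential term $I:=\int_{B^r(T)}\int_0^T G(x-\xi(s),T-s)\,ds\,dx$. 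Using Fubini, substituting $\sigma=T-s$, and shifting $y=x-\xi(T)$, I would rewrite this as $I=\int_0^T J(\sigma)\,d\sigma$ with $J(\sigma):=\int_{|y|\le r}G(y+\eta(\sigma),\sigma)\,dy$, where $\eta(\sigma):=\xi(T)-\xi(T-\sigma)$ and $d(\sigma):=|\eta(\sigma)|=|\xi(T-\sigma)-\xi(T)|$. Note that $\T(r)=\{\sigma\in[0,s_0]:d(\sigma)\le r\}$, so $\tau$ is precisely the distribution function of $d(\cdot)$ under Lebesgue measure on $[0,s_0]$, which is what lets the occupation time enter.

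For the bound $I\le Cr^2$ I would split the $\sigma$-integral at the natural scale $\sigma=r^2$. On $(0,r^2)$ I use the total-mass estimate $J(\sigma)\le\int_{\R^N}G(y+\eta(\sigma),\sigma)\,dy=1$, giving a contribution $\le r^2$. On $(r^2,T)$ I use the sup bound $G\le(4\pi\sigma)^{-N/2}$, so $J(\sigma)\le c_N r^N\sigma^{-N/2}$; since $N\ge3$ the integral $\int_{r^2}^\infty\sigma^{-N/2}\,d\sigma$ converges and equals a constant times $r^{2-N}$, whence this piece is $\le c_N r^N\cdot r^{2-N}=Cr^2$. This yields the first alternative in the minimum.

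For the occupation-time bound I would first dispose of the tail $\sigma\in(s_0,T)$, where $\sigma$ is bounded below by $s_0$ so $J(\sigma)\le c_N r^N\sigma^{-N/2}$ integrates to $\le Cr^N$. On $(0,s_0)$ I would decompose dyadically in the distance: set $S_k:=\{\sigma\in[0,s_0]:2^{k-1}r<d(\sigma)\le 2^k r\}$ for $k\ge1$ and $S_0:=\{\sigma\in[0,s_0]:d(\sigma)\le r\}$, so that $\mu(S_k)\le\tau(2^k r)$. For $k\ge1$, when $|y|\le r$ and $d(\sigma)>r$ one has $|y+\eta(\sigma)|\ge d(\sigma)-r\ge d(\sigma)/2$, giving $J(\sigma)\le c_N r^N\sigma^{-N/2}e^{-c d(\sigma)^2/\sigma}$. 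The key elementary fact is that $\sup_{\sigma>0}\sigma^{-N/2}e^{-c l^2/\sigma}\le C_N l^{-N}$, with the maximum attained at $\sigma\sim l^2$, so $\int_{S_k}\sigma^{-N/2}e^{-c d^2/\sigma}\,d\sigma\le C_N(2^k r)^{-N}\mu(S_k)\le C_N(2^k r)^{-N}\tau(2^k r)$. Summing over $k\ge1$ and comparing the dyadic sum with the integral—using that $\tau$ is nondecreasing, so that $(2^kr)^{-N}\tau(2^kr)\le C\int_{2^kr}^{2^{k+1}r}\tau(l)l^{-N-1}\,dl$—gives $r^N\sum_{k\ge1}(2^kr)^{-N}\tau(2^kr)\le Cr^N\int_r^\infty\tau(l)l^{-N-1}\,dl$. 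Finally, on $S_0$ the sup and mass bounds together give a contribution $\le C\min\{r^2,\tau(r)\}\le C\tau(r)$, and since $r^N\int_r^\infty\tau(l)l^{-N-1}\,dl\ge r^N\tau(r)\int_r^\infty l^{-N-1}\,dl=\tau(r)/N$, this too is absorbed into the occupation-time term.

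Combining the two estimates via the elementary inequality $\min\{a,b+c\}\le\min\{a,b\}+c$ for $c\ge0$, together with the initial-data contribution, yields $\int_{B^r(T)}u(x,T)\,dx\le C[\min\{r^N\int_r^\infty\tau(l)l^{-N-1}\,dl,\ r^2\}+r^N]$ for all small $r$. I expect the main obstacle to be the occupation-time bound: since Theorem~\ref{th:upper} assumes nothing about $\xi$, the explicit $\sigma$-dependence of the kernel cannot be tied to $d(\sigma)$ a priori, so the two must be decoupled through the dyadic-in-distance decomposition and the pointwise maximum estimate above; care is then needed in the region of small $\sigma$ and small $d$, where the sup bound $c_N r^N\sigma^{-N/2}$ fails and the mass bound $J\le1$ must be used instead.
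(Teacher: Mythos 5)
Your proposal is correct and follows essentially the same route as the paper: the Duhamel representation (initial data giving the additive $Cr^N$), a universal $Cr^2$ bound for the potential term, and a decomposition of the time integral according to the size of $|\eta(s)|$ relative to $r$, with the pointwise kernel maximum $\sup_{s>0}s^{-N/2}e^{-cl^2/s}\le C_N l^{-N}$ converting the far-field contribution into $r^N\int_r^\infty\tau(l)l^{-N-1}dl$ and the mass bound handling the occupation set. Two implementation details differ. For the $Cr^2$ bound the paper uses a rearrangement-type inequality, $\int_{B_0^r}e^{-|y-\eta(s)|^2/4s}dy\le\int_{B_0^r}e^{-|y|^2/4s}dy$ (Lemma~\ref{le:universal}), instead of your split of the time integral at $s=r^2$; both are elementary and equivalent in outcome. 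For the far-field region the paper makes a single split at distance $2r$ and writes $\int_{|\eta(s)|>2r}|\eta(s)|^{-N}ds=\int_{2r}^\infty l^{-N}d\tau(l)$, then integrates by parts in the Lebesgue--Stieltjes sense, whereas you sum a dyadic-in-distance decomposition and compare the sum with the integral via monotonicity of $\tau$; these are interchangeable, and your version avoids the Stieltjes boundary terms. One small slip to fix: you invoke $|y+\eta(\sigma)|\ge d(\sigma)-r\ge d(\sigma)/2$ for $d(\sigma)>r$, but the second inequality requires $d(\sigma)\ge 2r$ and fails on your shell $S_1=\{r<d\le 2r\}$; start the dyadic shells at $2r$ (exactly the paper's choice of splitting at $\T(2r)$) and fold $\{d(\sigma)\le 2r\}$ into the near region, which your mass-bound argument already covers since $\tau(2r)\le C\,r^N\int_r^\infty\tau(l)l^{-N-1}dl$.
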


We note that if $r^{-\kappa} \tau(r)$ is nonincreasing in $r$ with some constant $\kappa<N$, then
the condition on $\tau$ in Theorem~\ref{th:upper}  is simplified as
\[
\begin{aligned}
r^N \int_{r}^\infty \tau(l)l^{-N-1}dl
&=  r^N \int_{r}^\infty l^{-\kappa}  \tau(l)  l^{-N-1+\kappa}dl\\
\label{b1}
&\leq  r^N  r^{-\kappa}  \tau(r) \int_{r}^\infty l^{-N-1+\kappa}dl
\leq C \tau(r).
\end{aligned}
\]

We apply Theorems~\ref{th:lower} and \ref{th:upper} to some special cases.
We first consider the simple case where $\xi(t)$ is $\alpha$-H\"older continuous at $t=T$
with  $0<\alpha  \leq 1/2$.

\begin{cor} \label{co:bound}
Let $N \geq 3$.   Suppose that  $\xi$ satisfies
\[
  c_1 (T-t)^\alpha \leq |\xi(T) - \xi(t)| \leq  c_2 (T-t)^\alpha, \qquad t \in [t_0,T],
\]
with some $\alpha \in (0,1/2]$, $c_1,c_2>0$ and $t_0 \in [0,T)$.
Then there exist constants $C_1,C_2>0$ and $R>0$ such that
the solution $u$ of \eqref{eq:atdelta} satisfies
the following inequalities for $r \in (0, R]$:
\begin{description}
\item[\rm (i)]
If $1/N < \alpha \leq 1/2$, then
\[
C_1  r^{1/\alpha}  \leq \dint_{B^r(T)} u(x,T) dx  \leq  C_2 r^{1/\alpha}.
\]
\item[\rm (ii)] If $0 < \alpha  \leq 1/N$, then
\[
C_1  r^{N}   \leq  \int_{B^r(T)} u(x,T) dx \leq C_2  r^{N}.
\]
\end{description}
\end{cor}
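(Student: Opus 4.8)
The plan is to derive Corollary~\ref{co:bound} directly from the two-sided bounds of Theorems~\ref{th:lower} and~\ref{th:upper} by computing the auxiliary functions $\sigma(r)$ and $\tau(r)$ under the stated H\"older assumption $c_1(T-t)^\alpha \le |\xi(T)-\xi(t)| \le c_2(T-t)^\alpha$. The key observation is that the two-sided power-law control on $|\xi(T)-\xi(t)|$ translates into a two-sided power-law control on both the first exit time $\sigma$ and the occupation time $\tau$. Concretely, setting $s=T-t$, the condition $|\xi(T-s)-\xi(T)| \le r$ is, up to constants, equivalent to $s^\alpha \lesssim r$, i.e. $s \lesssim r^{1/\alpha}$. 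Thus I would first show that there are constants $a_1,a_2>0$ with $a_1 r^{1/\alpha} \le \sigma(r) \le \tau(r) \le a_2 r^{1/\alpha}$ for all small $r>0$.

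For case (i), where $1/N < \alpha \le 1/2$, I would carry out the following steps. First, for the lower bound I check that $\sigma(r)=O(r^2)$ as $r\to 0$: since $\sigma(r) \asymp r^{1/\alpha}$ and $\alpha \le 1/2$ gives $1/\alpha \ge 2$, we indeed have $r^{1/\alpha} = O(r^2)$, so Theorem~\ref{th:lower} applies. It yields $\int_{B^r(T)} u(x,T)\,dx \ge C\{\sigma(\theta r)+r^N\} \ge C'\{r^{1/\alpha}+r^N\}$; because $1/\alpha < N$ (equivalent to $\alpha > 1/N$), the term $r^{1/\alpha}$ dominates $r^N$ as $r\to 0$, giving the lower bound $C_1 r^{1/\alpha}$. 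For the upper bound I use Theorem~\ref{th:upper}. Since $\tau(l) \asymp l^{1/\alpha}$ with exponent $\kappa = 1/\alpha < N$, the remark following Theorem~\ref{th:upper} applies with this $\kappa$ (after checking that $l^{-\kappa}\tau(l)$ is essentially bounded, or by direct integration of $l^{1/\alpha-N-1}$), so $r^N \int_r^\infty \tau(l)l^{-N-1}\,dl \le C\tau(r) \asymp r^{1/\alpha}$. Comparing $r^{1/\alpha}$ with $r^2$ inside the $\min$, again $1/\alpha \ge 2$ shows $r^{1/\alpha} \le r^2$ for small $r$, so the minimum is $r^{1/\alpha}$, and adding the $r^N$ term (which is lower order) yields the upper bound $C_2 r^{1/\alpha}$.

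For case (ii), where $0<\alpha \le 1/N$, the dominant term flips. The lower bound $\int_{B^r(T)} u\,dx \ge C r^N$ is immediate from Theorem~\ref{th:lower} (the $r^N$ term is always present, and the $\sigma$-hypothesis still holds since $1/\alpha \ge N \ge 2$). For the upper bound, I again invoke Theorem~\ref{th:upper}: now $\tau(r) \asymp r^{1/\alpha}$ with $1/\alpha \ge N$, so $r^{1/\alpha} \le r^N$ for small $r$, and the $r^N$ term on the right-hand side dominates. One must be slightly careful with the remark's hypothesis $\kappa < N$ when $\alpha = 1/N$ makes $\kappa = N$; here I would instead estimate $r^N\int_r^\infty \tau(l)l^{-N-1}\,dl$ directly, noting that $\int_r^\infty l^{1/\alpha - N - 1}\,dl$ converges (since $1/\alpha - N \ge 0$, the integrand needs the upper cutoff from the finite range $s\in[0,s_0]$, equivalently $\tau(l)$ saturates at $s_0$ for $l$ large), and the resulting bound is at most a constant times $r^N$. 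Thus the $\min$ and the additive $r^N$ are both dominated by $r^N$, giving $C_2 r^N$.

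The main obstacle is the boundary/integration bookkeeping in case (ii) at the critical exponent $\alpha = 1/N$, where $\kappa = N$ sits exactly at the edge of the remark's applicability and the integral $\int_r^\infty \tau(l)l^{-N-1}\,dl$ no longer decays by a clean power law; here the finiteness of the occupation window (the fact that $\tau(l) \le s_0$ is bounded and $\T(r)\subset[0,s_0]$) must be used to control the tail of the integral and confirm it does not introduce logarithmic corrections that would beat $r^N$. Away from this critical case the argument is a routine comparison of exponents, driven entirely by the single inequality $1/\alpha \gtrless N$ deciding which of $r^{1/\alpha}$ and $r^N$ wins.
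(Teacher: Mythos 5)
Your argument for case (i) and for case (ii) with $\alpha<1/N$ is correct and is essentially the paper's own proof: translate the two-sided H\"older bound into $\sigma(r)\ge (r/c_2)^{1/\alpha}$ and $\tau(r)\le (r/c_1)^{1/\alpha}$, feed these into Theorems~\ref{th:lower} and~\ref{th:upper}, and compare the exponents $1/\alpha$ and $N$. However, there is a genuine gap exactly at the point you flag as the main obstacle, namely the upper bound at the critical exponent $\alpha=1/N$, and your proposed resolution does not close it. At $\alpha=1/N$ one has $\tau(l)\le Cl^{N}$, so $l^{-N-1}\tau(l)\le Cl^{-1}$ and
\[
r^N\int_r^\infty l^{-N-1}\tau(l)\,dl \;\le\; Cr^N\int_r^{c_0} l^{-1}\,dl \;+\; Cr^N\int_{c_0}^{\infty} l^{-N-1}\tau(c_0)\,dl \;\le\; Cr^N\log(1/r)+Cr^N .
\]
The saturation $\tau(l)\le s_0$ for large $l$, which you invoke, only controls the tail at $l\to\infty$; the logarithm comes from the \emph{lower} end $l\in[r,c_0]$, where $l^{-N-1}\tau(l)\asymp l^{-1}$, and it survives. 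So using Theorem~\ref{th:upper} as a black box yields only $Cr^N\log(1/r)$, which is strictly weaker than the claimed $C_2r^N$ in case (ii).

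The paper removes the logarithm by \emph{not} treating Theorem~\ref{th:upper} as a black box at $\alpha=1/N$: it returns to the decomposition $I_1+I_2+I_3$ in that proof and re-estimates the middle term $I_2(r)$ using the Gaussian factor that the black-box statement discards. Since $|\eta(s)|\ge c_1 s^{1/N}$, one has
$\exp\bigl(-|\eta(s)|^2/(16s)\bigr)\le \exp\bigl(-c\,s^{2/N-1}\bigr)$, and because $2/N-1<0$ for $N\ge 3$ this factor decays superpolynomially as $s\downarrow 0$; hence
$I_2(r)\le Cr^N\int_{cr^N}^{\infty}s^{-N/2}\exp(-c\,s^{2/N-1})\,ds\le Cr^N$ with the integral bounded uniformly in $r$. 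To complete your proof you would need to add this (or an equivalent) refinement; the exponent comparison alone cannot reach the sharp $r^N$ bound at $\alpha=1/N$.
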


When $\xi(\cdot)$ is a sample path of the fractional Brownian motion,
then we can apply Theorems~\ref{th:lower} and \ref{th:upper} to
obtain the following result.

\begin{thm}
[Fractional Brownian motion]\label{th:Brown}
Let $N \geq 3$.
Suppose that $\xi(\cdot)$  is a sample path of the fractional Brownian motion
with the Hurst exponent $H$.
Then for every $t \in (0,T]$ and $R>0$,
there exist $C_1(\omega),C_2(\omega)>0$
such that the solution $u$ of \eqref{eq:atdelta} satisfies
the following inequalities  for  $r \in (0,R]$ with probability one:
\begin{description}
\item[\rm (i)] If $1/N< H \le 1/2$, then
\[
C_1 r^{1/H}  \{ \log\log (1/r)\}^{-1/(2H)-\delta}
\leq \dint_{B^r(T)} u(x,t) dx  \leq C_2r^{1/H} \{\log\log (1/r)\}^{1+\delta} .
\]
\item[\rm (ii)] If $H =1/N$, then
\[
C_1  r^N  \leq \dint_{B^r(T)} u(x,t) dx
 \leq C_2  r^{N} ( \log 1/r)^2  \{ \log\log (1/r)\}^{1+\delta}.
\]
\item[\rm (iii)]  If $0<H <1/N$, then
\[
C_1  r^{N}
 \leq \dint_{B^r(T)} u(x,t) dx  \leq C_2 r^{N}\log(1/r) \{ \log \log (1/r)\}^{1+\delta}.
\]
\end{description}
\end{thm}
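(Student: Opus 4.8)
The plan is to derive the three probabilistic estimates by feeding the deterministic bounds of Theorems~\ref{th:lower} and \ref{th:upper} the known almost-sure growth rates of the occupation time $\tau(r)$ and the first exit time $\sigma(r)$ for a sample path of fractional Brownian motion (fBm) with Hurst exponent $H$. The key observation is that fBm is self-similar of index $H$ and has stationary increments, so the local behaviour of $|\xi(T)-\xi(T-s)|$ near $s=0$ is governed, up to logarithmic corrections dictated by laws of the iterated logarithm, by the scaling $|\xi(T)-\xi(T-s)| \approx s^{H}$. Inverting this relation suggests $\sigma(r)\approx r^{1/H}$ and $\tau(r)\approx r^{1/H}$, and the $\{\log\log(1/r)\}$ factors in the statement are exactly the fluctuations coming from the upper and lower laws of the iterated logarithm for fBm.

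First I would record the precise a.s.\ two-sided bounds on the modulus of continuity and on the small-time behaviour of fBm. For the lower bound on $\sigma$, I would invoke Chung-type (lower) laws of the iterated logarithm: almost surely there is a sequence of scales along which $\sup_{T-s<t<T}|\xi(T)-\xi(t)|$ is at least of order $s^{H}\{\log\log(1/s)\}^{-\text{const}}$, which by inverting yields $\sigma(\theta r)\gtrsim r^{1/H}\{\log\log(1/r)\}^{-1/(2H)-\delta}$; feeding this into Theorem~\ref{th:lower} (whose hypothesis $\sigma(r)=O(r^2)$ holds here precisely when $1/H\ge 2$, i.e.\ $H\le 1/2$) produces the left inequality in~(i). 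For the upper bounds I would use the ordinary (upper) law of the iterated logarithm to bound the occupation time: controlling how long the path stays within distance $r$ of its endpoint gives $\tau(r)\lesssim r^{1/H}\{\log\log(1/r)\}^{1+\delta}$, and since $r^{-1/H}\tau(r)$ is, up to the slowly varying correction, monotone with exponent $\kappa=1/H$, the simplification noted after Theorem~\ref{th:upper} applies when $1/H<N$, i.e.\ $H>1/N$; combined with the universal $r^N$ term this yields the right inequality in~(i).

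For cases (ii) and (iii), the exponent $\kappa=1/H$ reaches and then exceeds the critical value $N$, so the simplification $r^N\int_r^\infty \tau(l)l^{-N-1}dl\le C\tau(r)$ no longer holds and the integral must be estimated directly. In the critical case $H=1/N$ the integrand $\tau(l)l^{-N-1}\approx l^{-1}\{\log\log\}^{1+\delta}$ produces an extra logarithmic factor $\log(1/r)$ upon integration, and squaring or a second integration accounts for the $(\log 1/r)^2$ appearing in~(ii); for $H<1/N$ the occupation time decays faster than $r^N$ but the integral $\int_r^\infty\tau(l)l^{-N-1}\,dl$ is dominated by its behaviour near $l=r$ and contributes a single $\log(1/r)$, giving~(iii). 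In both cases the lower bound degenerates to the trivial $C_1 r^N$ coming directly from the $r^N$ term in Theorem~\ref{th:lower}, since the exit-time contribution $\sigma(\theta r)\approx r^{1/H}$ is now of smaller order than $r^N$.

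I expect the main obstacle to be the careful bookkeeping of the logarithmic corrections, namely matching the precise powers of $\log\log(1/r)$ (the $-1/(2H)-\delta$ in the lower bound versus the $1+\delta$ in the upper bound) to the correct form of the law of the iterated logarithm — the Chung lower LIL controls the \emph{minimal} excursion and enters $\sigma$, while the usual upper LIL controls the \emph{maximal} excursion and enters $\tau$, and these must be inverted in the right direction. A secondary delicate point is that these LIL statements hold along the specific scale $r\to 0$ with probability one but the constants $C_1(\omega),C_2(\omega)$ genuinely depend on the sample path; I would need to confirm that the stationarity of fBm increments lets me transport the known small-time LIL at a fixed base point to the endpoint $t=T$ uniformly, so that the stated inequalities hold simultaneously for all $r\in(0,R]$ rather than merely along a subsequence of scales.
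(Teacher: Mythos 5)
Your overall architecture is the right one --- feed almost-sure bounds on $\sigma(r)$ and $\tau(r)$ into Theorems~\ref{th:lower} and \ref{th:upper}, and the lower bound via a Chung-type small-deviation estimate for $\sigma$ (the paper uses an estimate of Wang plus Borel--Cantelli, which is exactly that) is essentially what the paper does in Proposition~\ref{pA1}. The case-by-case evaluation of $r^N\int_r^\infty \tau(l)l^{-N-1}\,dl$ at, above, and below the critical exponent also matches the paper's computation, including the extra $\log(1/r)$ and $(\log(1/r))^2$ factors.

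The genuine gap is in how you propose to obtain the upper bound on the occupation time. You claim that ``the ordinary (upper) law of the iterated logarithm'' gives $\tau(r)\lesssim r^{1/H}\{\log\log(1/r)\}^{1+\delta}$. It cannot: the upper LIL bounds the displacement $|\xi(T)-\xi(T-s)|$ from \emph{above}, i.e.\ it says the path does not travel too far, which is the wrong direction entirely --- a path that stays close to $\xi(T)$ is perfectly consistent with the LIL and would make $\tau(r)$ of order one for every $r$. An upper bound on $\tau(r)$ requires a quantitative statement that the path does \emph{not} linger near a point, and for fBm this is not a pathwise modulus-of-continuity fact but an anti-concentration property. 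The paper's route is the real content of Section~\ref{sect:Brown}: a lower bound on the determinant of the covariance of the increments (local nondeterminism, \cite[Lemma 3.3]{CS}) gives a uniform joint density bound (Lemma~\ref{lem2}), which yields $E[\tau(r)^n]\le n!\,C^n r^{n/H}$-type moment estimates (Proposition~\ref{p1}, Lemma~\ref{le:1}, with the three regimes $H>1/N$, $H=1/N$, $H<1/N$ emerging already at the level of these moments), hence exponential tails for $\tau(e^{-m})$ by Chebyshev (Lemma~\ref{ine1}), and finally Borel--Cantelli. In particular the $\{\log\log(1/r)\}^{1+\delta}$ in the upper bounds is not an LIL fluctuation at all; it is the price of summability in Borel--Cantelli applied to the exponential tail, and the first factor of $\log(1/r)$ in case (ii) comes from the critical moment bound $E[Z_t^n]\le n!\,C^n(\log t)^n$, not from the integration against $l^{-N-1}$. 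Relatedly, your remark that for $H<1/N$ the occupation time ``decays faster than $r^N$'' is backwards: in that regime the path has a local time and $\tau(r)$ is of order $r^N$, which is precisely why the upper bound saturates at $r^N$ times logarithms. Without the moment/tail machinery (or an equivalent occupation-density argument) the upper halves of (i)--(iii) are unproved.
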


This paper is organized as follows.
In Section~\ref{sect:removability},
we discuss the removability of singularities.
In Sections~\ref{sect:lower}, we consider (\ref{eq:atdelta})
and derive a lower bound and an upper bound of the solutions
by using the functions $\sigma(r)$ and $\tau(r)$.
In Section~\ref{sect:Brown},
we study the case where $\xi(t)$ is a sample path of
the fractional Brownian motion.

\Sect{Removability of singularities}\label{sect:removability}
In this section,  we give a proof of
Theorem~\ref{th:remove} by usng a suitable cut-off function.
The  following lemma was proved in \cite{TakahashiY}.

\begin{lem}\label{lem:cut} \
Let $N\geq1$, $t_1, t_2\in \R$ $(t_1<t_2)$ and $\alpha\in(0,1]$.
Suppose that $\xi(t)$ is $\alpha$-H\"older continuous
in $t\in [t_1, t_2]$ for some $\alpha\in(0,1]$.
Then there exist $\delta_0=\delta_0(\alpha,N,t_1,t_2)\in(0,1)$
and $C=C(\alpha,N,t_1,t_2)>0$ independent of $x,t$
with the following property:
For every $\delta \in(0,\delta_0)$,
there exists a  cut-off function
$\eta \in  C^\infty(\R^N \times \R)$
such that
\[
\begin{aligned}
& 0\leq \eta(x,t) \leq 1,
\\
&	\eta(x,t)=\left\{
	\begin{split}
	&1  && \text{if } |x-\xi(t)|  \geq \delta,
	\\ &0 && \text{if } |x-\xi(t)| \leq \delta/2,
	\end{split}	\right.
\\
&	|\nabla \eta | \leq C\delta^{-1},\quad
	|\Delta \eta | \leq C\delta^{-2},\quad
	|(\eta)_t | \leq C\delta^{-1/\alpha}
\end{aligned}
\]
for $(x,t)\in \R^N\times[t_1,t_2]$.
\end{lem}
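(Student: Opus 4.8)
The plan is to reduce everything to a \emph{fixed} radial cut-off profile composed with a time-regularized version of the singular path. The only obstruction to simply setting $\eta(x,t)=\psi((x-\xi(t))/\delta)$ for a fixed smooth radial $\psi$ is that $\xi$ is merely $\alpha$-H\"older, hence not differentiable, so such an $\eta$ need not be $C^\infty$ (indeed not even $C^1$) in $t$. I would therefore first extend $\xi$ to a slightly larger interval keeping its $\alpha$-H\"older seminorm (e.g.\ by constant extension past $t_1,t_2$), mollify it in time at a scale $\lambda=\eps\,\delta^{1/\alpha}$, with $\eps>0$ a small constant fixed below, writing $\xi_\delta:=\xi*\rho_\lambda$ for a standard mollifier $\rho$, and set
\[
\eta(x,t):=\psi\!\left(\frac{x-\xi_\delta(t)}{\delta}\right),
\]
where $\psi(y)=\Phi(|y|)$ with $\Phi\in C^\infty([0,\infty);[0,1])$, $\Phi\equiv0$ on $[0,5/8]$ and $\Phi\equiv1$ on $[7/8,\infty)$. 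Since $\xi_\delta$ is smooth in $t$ and $\psi$ is smooth, $\eta\in C^\infty(\R^N\times\R)$ automatically.

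The spatial bounds are immediate and independent of the regularization: $|\nabla\eta|\le\delta^{-1}\|\nabla\psi\|_\infty$ and $|\Delta\eta|\le\delta^{-2}\|\Delta\psi\|_\infty$, giving the claimed $C\delta^{-1}$ and $C\delta^{-2}$. The whole point is the time derivative $\partial_t\eta=-\delta^{-1}(\nabla\psi)((x-\xi_\delta)/\delta)\cdot\xi_\delta'(t)$, so $|\partial_t\eta|\le C\delta^{-1}|\xi_\delta'(t)|$. Standard mollification estimates for an $\alpha$-H\"older function give $|\xi_\delta(t)-\xi(t)|\le C\lambda^\alpha$ and $|\xi_\delta'(t)|\le C\lambda^{\alpha-1}$, where $C$ depends on $\alpha$ and the H\"older seminorm of $\xi$ on $[t_1,t_2]$ (hence on $t_1,t_2$). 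With $\lambda=\eps\,\delta^{1/\alpha}$ this yields $|\xi_\delta'(t)|\le C\eps^{\alpha-1}\delta^{(\alpha-1)/\alpha}$, and since $\delta^{-1}\delta^{(\alpha-1)/\alpha}=\delta^{-1/\alpha}$ we obtain $|\partial_t\eta|\le C\eps^{\alpha-1}\delta^{-1/\alpha}$, exactly the required order once $\eps$ is a fixed constant.

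It remains to fix $\eps$ so that the level-set requirements hold with respect to the \emph{true} path $\xi$. With $\lambda=\eps\,\delta^{1/\alpha}$ the displacement is $d(t):=|\xi_\delta(t)-\xi(t)|\le C\eps^\alpha\delta$; I would choose $\eps$ (depending only on $\alpha,N,t_1,t_2$) small enough that $C\eps^\alpha\le1/8$, so $d(t)\le\delta/8$. The triangle inequality then gives, for $|x-\xi(t)|\ge\delta$, that $|x-\xi_\delta(t)|\ge 7\delta/8$ whence $\eta=1$, and for $|x-\xi(t)|\le\delta/2$, that $|x-\xi_\delta(t)|\le5\delta/8$ whence $\eta=0$; as $\eps^{\alpha-1}$ is now an absolute constant, it is absorbed into $C$. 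The main obstacle is precisely this balance: the demand $|\partial_t\eta|\le C\delta^{-1/\alpha}$ forces $\lambda\sim\delta^{1/\alpha}$, which is the borderline scale at which the regularization error $\lambda^\alpha$ becomes comparable to the transition width $\delta$, so the construction survives only because the small constant $\eps$ keeps the displacement strictly inside the band $[\delta/2,\delta]$ while contributing merely a harmless factor $\eps^{\alpha-1}$ to the time-derivative bound. A minor technical point is the mollification near the endpoints $t_1,t_2$, handled by the H\"older-preserving extension, together with the choice of $\delta_0$ ensuring $\lambda=\eps\,\delta^{1/\alpha}$ stays within the extended interval.
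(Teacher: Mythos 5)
Your construction is correct: extending $\xi$ with its H\"older seminorm preserved, mollifying at the scale $\lambda=\eps\,\delta^{1/\alpha}$, and composing a fixed radial profile with the regularized path gives exactly the stated bounds, and your choice of $\eps$ correctly reconciles the displacement estimate $|\xi_\delta-\xi|\le C\eps^\alpha\delta$ with the level-set requirements while only costing a harmless $\eps^{\alpha-1}$ in the time-derivative bound. The paper itself gives no proof of this lemma (it is quoted from the cited work of Takahashi and Yanagida), and your argument is essentially the construction used there, so nothing further is needed.
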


Now we prove Theorem~\ref{th:remove}.
\begin{proof}[Proof of Theorem~\ref{th:remove}]
For  $0<t_1<t_2<T$,
let  $\varphi\in C_0^\infty(\R^N\times(0,T))$ be a test function
with  a compact support in $\R^N \times(t_1,t_2)$.
Then the Weyl lemma for the heat equation
(see, e.g., \cite[Section 6]{G})  implies that
if  $u$ satisfies
\begin{equation}\label{eq:d}
	\int_{t_1}^{t_2}\int_{\R^N}u(\varphi_t +\Delta \varphi) \,dxdt =0
\end{equation}
for any $\varphi$, then it turns out that $u$ actually
belongs to $C^{2,1}(\R^N\times(0,T))$.
Hence, it suffices to prove (\ref{eq:d}) for the removability of
 the singularity of $u$ at $x=\xi(t)$.

Suppose that $u$ satisfies for some $k>0$
\[
\int_{B^r(t)} u(x,t) dx = o(r^k) \qquad {\rm as\ }r\to 0
\]
locally uniformly in $t\in(0,T)$.

Let $\eps\in(0,1)$ be arbitrarily given.  By assumption on $u$, we can take
$\delta_1 = \delta_1(t_1,t_2,\eps)  \in(0,1)$  such that
\[
\int_{B^r(t)} u(x,t)dx  \leq   \eps r^{k},
\qquad 0<r<\delta_1, \ t \in [t_1,t_2] \subset (0,T).
\]
Let $\delta \in ( 0 , \min \{ \delta_0 , \delta_1 \} )$,
where $\delta_0$ is defined in Lemma \ref{lem:cut}.
For this $\delta$, we take a cut off function $\eta\in C_0^{\infty}(\mathbb{R}^N)$
 constructed in Lemma \ref{lem:cut}.
Multiplying \eqref{eq:main} by $\eta\varphi$
and integrating it by parts, we obtain
\begin{equation}\label{eq:de}
		 \int_{t_1}^{t_2} \int_{\R^N}
		 u \{ (\varphi \eta)_t
		 + \Delta(\varphi\eta) \} dx dt
		 =0.
\end{equation}
By simple calculations, we have
\[
\begin{aligned}
	\Big|\int_{t_1}^{t_2}\int_{\R^N}
	u\{ (\varphi \eta)_t-\varphi_t\} dxdt \Big|
 & \leq     \int_{t_1}^{t_2}  \int_{\R^N}
	u \big | (\varphi \eta)_t-\varphi_t \big | dxdt
\\
 & \leq  C_1 (1+\delta^{-1/\alpha})   \int_{t_1}^{t_2}
\dint_{B^{\delta}(t)} u(x,t) dx dt
\\
 & \leq  C_2\eps(1+\delta^{-1/\alpha})\delta^{k}
\end{aligned}
\]
and
\[
\begin{aligned}
	\Big|\int_{t_1}^{t_2}\int_{\R^N}
	u\{ \Delta(\varphi \eta)-\Delta\varphi\} dxdt \Big|
 & \leq     \int_{t_1}^{t_2}  \int_{\R^N}
	u \big | \Delta(\varphi \eta)-\Delta\varphi  \big | dxdt
\\	& \leq
	C_3(1+\delta^{-1}+\delta^{-2}) \int_{t_1}^{t_2}
	\dint_{B^{\delta}(t)} u(x,t) dx    dt	\\
& \leq
	C_4 \eps  (1+\delta^{-1}+\delta^{-2})\delta^{k},
\end{aligned}
\]
where $C_1, C_2, C_3, C_4>0$ are constants independent of $\eps$.
Hence if $k = \max \{2, 1/\alpha \}$,  we obtain
\[
\left|\int_{t_1}^{t_2}\int_{\R^N}u(\varphi_t +\Delta \varphi) \,dxdt\right|  \leq C_5 \eps,
\]
where $C_5>0$ is a constant  independent of $\eps>0$ and $\delta \in (0,\min\{\delta_0,\delta_1\})$.
Since $\eps>0$ is arbitrary, this implies (\ref{eq:d}).
This completes the proof.
\end{proof}

\Sect{Nonremovable singularities}\label{sect:lower}
In this section, we consider the case where the singularity at
$\xi(t)$  is not removable.
Let us consider the initial value problem (\ref{eq:atdelta})  with $u(x,0)=u_0(x)$,
where $u_0(x)$ is continuous positive and bounded on $\R^N$.
By using the heat kernel
\[
G(x,y,t):=\dfrac{1}{(4\pi t)^{N/2}} \exp\Big (- \dfrac{|x-y|^2}{4t} \Big),
\]
the solution of (\ref{eq:atdelta})  is expressed as
\[
	u(x,t)=\int_{\mathbb{R}^N} G(x,y,t)u_0(y)dy + \int_0^t  G(x,\xi(s),t-s) ds
\]
(see \cite{KT,TakahashiY}).
Since the first term of the right hand side is smooth, positive and bounded
in $t>0$, we see that for $R>0$, there exist constants $C_1>0$ and $C_2>0$ depending on $R$ and $u_0$ such that
\[
  C_1r^N \leq \int_{B^r(T)} \int_{\mathbb{R}^N} G(x,y,T)u_0(y)dy dx \leq C_2r^N  \quad
\mbox{ for }  r\in (0,R).
\]
Hence, it suffices to examine the second term
\[
\begin{aligned} F(x,T) : & =\int_0^T  G(x,\xi(t),T-t) dt
\\
 &  =\int_0^T \dfrac{1}{(4\pi (T-t))^{N/2}}
 \exp\Big (- \frac{|x-\xi(t)|^2}{4(T-t)}\Big ) dt.
\end{aligned}
\]
Here,  by exchanging the order of integration and changing the
variables $y=x-\xi(T)$ and $\eta(s)=\xi(T-s) -\xi(T)$, we have
\[
\begin{aligned}
\int_{B^r(T)} F(x,T) dx
& = \int_{|x-\xi(T)|\leq r} \int_0^T
 \dfrac{1}{(4\pi (T-t))^{N/2}} \exp\Big (- \frac{|x-\xi(t)|^2}{4(T-t)}\Big ) dt dx
  \\
 & =  \int_0^T \dfrac{1}{(4\pi s)^{N/2}}
 \int_{B_0^r} \exp\Big (-\frac{| y - \eta(s)  |^2}{4s}\Big)  dy ds,
\\
\end{aligned}
\]
where $B_0^r$ is defined as
\begin{equation*}
  B_0^r:= \{ y : |y|\leq r\}.
\end{equation*}

In the following, $C$ stands for generic constants whose value
 may change from line to line  but does not depend on other variables.

\begin{proof}[Proof of Theorem~\ref{th:lower}]
Let $0<\theta<1$ be arbitrarily given.  By assumption,
$\sigma$ satisfies $\sigma(\theta r) \leq Ar^2$ for all $r\in (0,R)$ with some constant $A>0$ and $R>0$.
For  $s \leq \sigma(\theta r)  \leq A r ^2 $  and a constant $\delta>0$, we have
\[
    \{ y :  |y - \eta(s) | \leq \delta s^{1/2} \}
     \subset    \{y : |y|\leq  \theta r+\delta  s^{1/2} \}
    \subset    \{y : |y|\leq \big (\theta+ \delta A^{1/2} \big) r \}.
\]
Hence, if we take   $\delta>0$ so small that $\theta+\delta A^{1/2} \leq 1$,
then
\[
B_0^r \supset
\{ y:  |y - \eta(s) | \leq \delta s^{1/2}\}.
\]

Now, for  $s \leq \sigma(\theta r) $, we have
\[
\begin{aligned}
 \int_{B_0^r} \exp\Big (-\frac{| y - \eta(s)  |^2}{4s}\Big)  dy
& \geq   \int_{|y - \eta(s) | \leq \delta  s^{1/2}}
  \exp\Big (-\frac{| y - \eta(s)  |^2}{4s}\Big)  dy
\\
& \geq  C  \int_{|y - \eta(s) | \leq \delta  s^{1/2}}   dy
 \geq Cs^{N/2}.
 \end{aligned}
\]
Hence we obtain
\[
\begin{aligned}
\int_{B^r(T)} F(x,T) dx  & \geq
   \int_0^{\sigma(\theta r)}\dfrac{1}{(4\pi s)^{N/2}}
 \int_{B_0^r} \exp\Big (-\frac{| y - \eta(s)  |^2}{4s}\Big)  dy ds
 \\
 & \geq
   \int_0^{\sigma(\theta r)}\dfrac{1}{(4\pi s)^{N/2}}
 \times Cs^{N/2} ds
\\
& \geq  C \sigma(\theta r).
\end{aligned}
\]
This completes the proof.
\end{proof}

\

Before giving a proof of Theorem~\ref{th:upper}, we give
an upper bound of positive solutions.

\begin{lem}\label{le:universal}
For any solution of \eqref{eq:atdelta},
there exists a constant $C>0$ such that
\[
\int_{B^r(T)} u(x,T)dx  \leq C r^2
\]
for $r>0$.
\end{lem}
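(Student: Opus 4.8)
The plan is to bound the two terms in the representation $u(x,T)=\int_{\R^N}G(x,y,T)u_0(y)\,dy+F(x,T)$ separately, and to reduce the whole matter to the second, $\xi$-dependent term, for which the key quantity (after the change of variables $y=x-\xi(T)$, $\eta(s)=\xi(T-s)-\xi(T)$ already performed above) is
\[
\int_{B^r(T)}F(x,T)\,dx=\int_0^T\frac{1}{(4\pi s)^{N/2}}\int_{B_0^r}\exp\!\Big(-\frac{|y-\eta(s)|^2}{4s}\Big)\,dy\,ds.
\]
First I would estimate the inner integral $I(s):=\int_{B_0^r}\exp(-|y-\eta(s)|^2/(4s))\,dy$ in two complementary ways, both uniform in the displacement $\eta(s)$: since the integrand is at most $1$, we have $I(s)\le|B_0^r|=\omega_N r^N$ (with $\omega_N$ the volume of the unit ball); and since enlarging the domain to all of $\R^N$ only increases the integral, we have $I(s)\le\int_{\R^N}\exp(-|y-\eta(s)|^2/(4s))\,dy=(4\pi s)^{N/2}$. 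Crucially, neither bound depends on where the singular point sits, which is exactly the universality asserted in the lemma.

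The second step is to split the $s$-integral at the level $s_\ast:=c\,r^2$ where the two estimates balance, i.e.\ where $(4\pi s)^{N/2}\approx\omega_N r^N$. On $[0,s_\ast]$ I would use the Gaussian-mass bound, so that the integrand collapses to $(4\pi s)^{-N/2}(4\pi s)^{N/2}=1$ and the contribution is at most $s_\ast=c\,r^2$. On $[s_\ast,T]$ I would use the volume bound, producing an integrand $\le C r^N s^{-N/2}$; here the assumption $N\ge3$ enters decisively, since $N/2>1$ makes $\int_{s_\ast}^{\infty}s^{-N/2}\,ds$ convergent and equal to a constant times $s_\ast^{1-N/2}=c^{\,1-N/2}r^{2-N}$. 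Multiplying by $r^N$ returns $C r^2$, so both pieces are $O(r^2)$ and $\int_{B^r(T)}F(x,T)\,dx\le C r^2$ for every $r>0$.

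Finally, for the first term I would use that $u_0$ is bounded, so $\int_{\R^N}G(x,y,T)u_0(y)\,dy\le\|u_0\|_\infty$ and hence $\int_{B^r(T)}(\cdots)\,dx\le\|u_0\|_\infty\,\omega_N r^N$, which is $\le C r^2$ on the bounded range of $r$ relevant here (as already noted, this term is comparable to $r^N$). I expect the only real obstacle to be the balancing argument of the second step: one must verify that the crossover scale is genuinely $s_\ast\sim r^2$ and that the tail integral converges, both of which hinge on $N\ge3$. Everything else is elementary, and the decisive conceptual point is that the estimate is obtained with no information on the trajectory $\eta(s)$, so the resulting constant $C$ is independent of $\xi$.
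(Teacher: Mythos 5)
Your argument is correct, and it reaches the bound by a genuinely different route than the paper. The paper first proves a symmetrization inequality: by splitting $B_0^r$ against the shifted ball $B_1^r(s)=\{|y-\eta(s)|\le r\}$ it shows
\[
\int_{B_0^r}\exp\Big(-\frac{|y-\eta(s)|^2}{4s}\Big)dy\;\le\;\int_{B_0^r}\exp\Big(-\frac{|y|^2}{4s}\Big)dy,
\]
so the trajectory is eliminated by replacing the off-center Gaussian with the centered one; the $r^2$ bound then comes from an explicit radial computation, essentially $\int_0^T s^{-N/2}e^{-\rho^2/4s}\,ds\le C\rho^{2-N}$ followed by $\int_0^r\rho^{2-N}\rho^{N-1}d\rho=r^2/2$. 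You instead keep the displacement $\eta(s)$ but use the two cruder uniform bounds $I(s)\le \omega_N r^N$ and $I(s)\le(4\pi s)^{N/2}$ and split the time integral at the crossover scale $s_\ast\sim r^2$; the tail converges precisely because $N\ge 3$, and the two pieces each give $Cr^2$. Your version avoids the rearrangement step entirely and makes the role of $N\ge3$ more transparent, at the cost of losing the sharper pointwise comparison (which is harmless here, since only the order $r^2$ is needed). One small caveat, which applies equally to the paper's own statement: the contribution of the initial data is of order $r^N$, so the asserted inequality ``for $r>0$'' really only holds on a bounded range of $r$ (or for the singular part $F$ alone); you flag this correctly, and it is how the lemma is actually used.
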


\begin{proof}
Set $B_1^r(s):= \{y \in \R^N : |y - \eta(s)| \leq r \}$.
We then have
\[
\begin{aligned}
  \int_{B_0^r }&
  \exp\Big (-\frac{|y-\eta(s)|^2}{4s} \Big) dy
\\ &  =   \int_{B_0^r \cap B_1^r(s)} \exp\Big (-\frac{|y-\eta(s)|^2}{4s} \Big) dy
+ \int_{B_0^r \setminus B_1^r(s)} \exp\Big (-\frac{|y-\eta(s)|^2}{4s} \Big) dy
\\ &  \leq    \int_{B_0^r \cap B_1^r(s)} \exp\Big (-\frac{|y-\eta(s)|^2}{4s} \Big) dy
+ \int_{B_1^r(s) \setminus  B_0^r} \exp\Big (-\frac{|y-\eta(s)|^2}{4s} \Big) dy
\\ &  =   \int_{B_1^r(s)} \exp\Big (-\frac{|y-\eta(s)|^2}{4s} \Big) dy
\\ &  =   \int_{B_0^r} \exp\Big (-\frac{|y|^2}{4s} \Big) dy  .
\end{aligned}
\]
Hence, there holds
\[
\begin{aligned}
\int_{B^r(T)} F(x,T)dx &
= \int_0^T    \dfrac{1}{(4\pi s)^{N/2}}\int_{B_0^r}
  \exp\Big (-\frac{|y-\eta(s)|^2}{4s} \Big) dy ds
\\
&    \leq  \int_0^T   \int_{B_0^r } \dfrac{1}{(4\pi s)^{N/2}}
  \exp\Big (-\frac{|y|^2}{4s} \Big) dy ds
\\
& = C \int_0^T   \int_0^r \dfrac{1}{(4\pi s)^{N/2}}
  \exp\Big (-\frac{\rho^2}{4s} \Big) \rho^{N-1} d\rho  ds.
\end{aligned}
\]
Evaluating the last integral, we obtain
\[
\int_{B^r(T)} F(x,T)dx \leq C  r^2.
\]
This completes the proof.
\end{proof}

\begin{proof}[Proof of Theorem~\ref{th:upper}]
We choose $R>0$ so small that $[0,s_0]\setminus \T(2R)$ is not empty and let $r\in (0,R)$.
If $|\eta(s)|\geq 2r$ and $|y|\leq r$, then
\[
  |y-\eta(s)|\geq |\eta(s)|-|y|\geq \frac{1}{2}|\eta(s)|.
\]
Hence we divide the integration into three parts to obtain
\[
  \begin{aligned}
    \int_{B_0^r} F(x,T) dx
    &\leq \int_{s_0}^T \int_{B_0^r} \frac{1}{(4\pi s)^{N/2}} dy ds\\
    &\qquad+\int_{s\in[0,s_0] \setminus \T(2r)}
    \int_{B_0^r} \frac{1}{(4\pi s)^{N/2}} {\rm exp}\left( -\frac{|\eta(s)|^2}{16s} \right) dyds\\
    &\qquad+\int_{s\in \T(2r)}\int_{B_0^r} \frac{1}{(4\pi s)^{N/2}} {\rm exp}
    \left( -\frac{|y-\eta(s)|^2}{4s} \right) dyds\\
    &=:I_1(r)+I_2(t)+I_3(r).
  \end{aligned}
\]
It is easy to see that $I_1(r)$ is bounded by $Cr^N$.
Using the integration by parts for the Lebesgue-Stieltjes measure $d\tau(l)$, we have
\[
  \begin{aligned}
    I_2(r)
    &\leq \int_{s\in[0,s_0] \setminus \T(2r)} \int_{B_0^r} |\eta(s)|^{-N} dyds\\
    &\leq Cr^N\int_{ |\eta(s)|> 2r} |\eta(s)|^{-N} ds\\
    &\leq Cr^N\int_{2r}^{\infty} l^{-N} d\tau(l)\\
    &=-Cr^N (2r)^{-N} \tau(2r)+ Cr^N \int_{2r}^\infty N l^{-N-1} \tau(l) dl   \\
    &\leq Cr^N \int_r^\infty  l^{-N-1} \tau(l) dl.
  \end{aligned}
\]
Finally, since
\[
\int_{B_0^r}
  \exp\Big (-\frac{|y-\eta(s)|^2}{4s} \Big) dy
\leq   \int_{\R^N}
  \exp\Big (-\frac{|y|^2}{4s}\Big) dy
 \leq C  s^{N/2},
\]
the integral $I_3(r)$ satisfies
\[
I_3(r)
\leq  C  \int_{s \in \T(2r)}   ds = C \tau(2 r)  .
\]
Combining these estimates
and using
\[
r^N \int_{2r}^\infty  l^{-N-1} \tau(l) dl \ge r^N \tau(2r) \int_{2r}^\infty  l^{-N-1}  dl  \geq
C\tau(2r),
\]
we obtain
\[
  \begin{aligned}
    \int_{B^r(T)} F(x,T) dx
    &\leq  Cr^N + Cr^N \int_r^\infty  l^{-N-1} \tau(l) dl +C \tau(2 r)\\
    &\leq  Cr^N + Cr^N \int_r^\infty  l^{-N-1} \tau(l) dl .
  \end{aligned}
\]
By Lemma~\ref{le:universal}, the proof is completed.
\end{proof}

\

\begin{proof}[Proof of Corollary~\ref{co:bound}]

First, if  $\xi$ satisfies
\[
   |\xi(T) - \xi(t)| \leq  c_2 (T-t)^\alpha, \qquad t \in [t_0,T],
\]
for some $c_2>0$ and $t_0 \in [0,T)$,
 we have  $\sigma(r) \geq (r/c_2)^{1/\alpha}$.
Then by Theorem~\ref{th:lower},   there exists $C_1>0$
and $R_1>0$
such that
\[
 \dint_{B^r(T)} u(x,T) dx  \geq  C_1 \max\{  r^{1/\alpha}, r^N \}, \qquad r \in (0,R_1).
\]

Next, if
\[
   |\xi(T) - \xi(t)| \geq c_1 (T-t)^\alpha , \qquad t \in [t_0,T],
\]
then $\tau(r) \leq (r/c_1)^{1/\alpha}$.
On the other hand, there exists $c_0>0$ such that $\tau(l ) =  \tau(c_0)$ for $l\ge  c_0$.
Hence, we have
\[
\begin{aligned}
 &r^N \int_r^\infty  l^{-N-1} \tau(l) dl
  \le r^N \int_r^\infty  l^{-N-1+1/\alpha}dl \le Cr^{1/\alpha}
  \quad \text{ if  } \alpha> 1/N,\\
&r^N \int_r^\infty  l^{-N-1} \tau(l) dl
\leq Cr^N \int_0^{c_0}  l^{-N-1+1/\alpha} dl
  +Cr^N \int_{c_0}^{\infty}  l^{-N-1} dl
 \le Cr^{N} \quad \text{ if  } \alpha< 1/N.
\end{aligned}
\]
Then,  by Theorem~\ref{th:upper}, there exist $C_2>0$ and $R_2>0$ such that
\[
  \begin{aligned}
 & \dint_{B^r(T)} u(x,T) dx \le C_2r^{1/\alpha} \quad \text{ if  } \alpha> 1/N,\\
  & \dint_{B^r(T)} u(x,T) dx \le C_2r^{N} \quad \text{ if  } \alpha< 1/N
\end{aligned}
\]
for  $r \in (0,R_2)$.
By setting $R=\min \{ R_1,R_2 \}$,
the proof for $\alpha \neq 1/N$ is completed.

Finally, we consider the case $\alpha=1/N$.
In this case, we need a more precise estimate for $I_2(r)$
than the proof of Theorem~\ref{th:upper}.
Since $c_1s^{1/N}\leq |\eta(s)|\leq c_2s^{1/N}$ and
$[0,s_0]\setminus \T(2r)\subset [cr^N,s_0]$ for some $c>0$, we have
\[
  \begin{aligned}
    I_2(r)
    &=\int_{s\in[0,s_0] \setminus \T(2r)}
    \int_{B_0^r} \frac{1}{(4\pi s)^{N/2}} {\rm exp}\left( -\frac{|\eta(s)|^2}{16s} \right) dyds
\\
    &\leq \int_{s\in[0,s_0] \setminus \T(2r)}
    \int_{B_0^r} \frac{1}{(4\pi s)^{N/2}} {\rm exp}\left( -cs^{2/N-1} \right) dyds
\\
    &\leq Cr^N\int_{cr^N}^{\infty}\frac{1}{(4\pi s)^{N/2}} {\rm exp}\left( -cs^{2/N-1} \right) ds
\\
    &\leq Cr^N.
  \end{aligned}
\]
This completes the proof.
\end{proof}

 \Sect{Fractional Brownian motion} \label{sect:Brown}
 In this section, we consider the case  where $\xi(t)$ is a sample path
of the fractional Brownian motion with the Hurst exponent $H$ with $0<H\le 1/2$.
We sometimes write $\xi_t$ for $\xi(t)$.
We denote by $E$ and $P$ the expectation and the probability, respectively,
of the fractional Brownian motion starting from the origin.
We first show that the Lebesgue measure $\tau(r)$
of  $\T (\rho)$ defined by \eqref{eq:bigtau}
has the following properties:

 \begin{pro}\label{pA1}
 Suppose that $\xi(t)$ is a sample path
 of the fractional Brownian motion  with the Hurst exponent $H$.
 Then
 for any $\delta>0$, there exists a constant  $r_0(\omega)>0$ such that
 \begin{align*}
 \sigma(r) \ge \{\log\log (1/r)\}^{-1/(2H)-\delta} r^{1/H}
\end{align*}
for  $r \in (0,r_0]$ with probability one.
\end{pro}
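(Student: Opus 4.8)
The plan is to convert the lower bound on the first exit time $\sigma(r)$ into an upper bound on the running maximum of the recentered time-reversed path $\eta(s)=\xi(T-s)-\xi(T)$, and then to control that maximum by self-similarity together with a Gaussian tail estimate and the Borel--Cantelli lemma. The first observation is that $\{\eta(s)\}_{s\ge0}$ is itself a fractional Brownian motion with Hurst exponent $H$: it is centered Gaussian, $\eta(0)=0$, and by stationarity of the increments $E[(\eta(s)-\eta(u))^2]=|s-u|^{2H}$, so its covariance coincides with that of fBm. In particular $\eta$ is self-similar, and writing $M(t):=\sup_{0\le s\le t}|\eta(s)|$ we have $M(t)\stackrel{d}{=}t^{H}M(1)$. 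Since $\sigma(r)\ge t$ holds whenever $M(t)\le r$, the proposition will follow once we show that $M(t_r)\le r$ for all small $r$, where $t_r:=\{\log\log(1/r)\}^{-1/(2H)-\delta}r^{1/H}$.

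Next I would discretize along the geometric sequence $r_n:=\theta^{n}$ for a fixed $\theta\in(0,1)$ and put $t_n:=t_{r_n}$. From $t_n^{H}=r_n\{\log\log(1/r_n)\}^{-1/2-H\delta}$ and self-similarity, $P(M(t_n)>r_{n+1})=P(M(1)>\theta\{\log\log(1/r_n)\}^{1/2+H\delta})$. The analytic input is a Gaussian tail bound for the supremum: since $M(1)=\sup_{0\le s\le1,\,|v|=1}\langle v,\eta(s)\rangle$ is the supremum of a centered Gaussian process with $E[M(1)]<\infty$ (Fernique) and bounded variances on $[0,1]$, the Borell--TIS inequality gives $P(M(1)>x)\le Ce^{-cx^{2}}$ for large $x$. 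Because $\log\log(1/r_n)\sim\log n$, this yields $P(M(t_n)>r_{n+1})\le C\exp(-c'(\log n)^{1+2H\delta})$, and the exponent $1+2H\delta>1$ makes the series $\sum_n P(M(t_n)>r_{n+1})$ convergent; this is precisely where the slack $\delta>0$ is used.

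By Borel--Cantelli there is almost surely a random index $N(\omega)$ with $M(t_n)\le r_{n+1}$ for all $n\ge N(\omega)$. To pass from the grid to all radii I would use monotonicity: $t_r$ is increasing in $r$ for small $r$ and $M$ is nondecreasing, so for $r\in[r_{n+1},r_n]$ with $n\ge N(\omega)$ one gets $M(t_r)\le M(t_n)\le r_{n+1}\le r$, hence $\sigma(r)\ge t_r$. Setting $r_0:=r_{N(\omega)}$ gives the claim for all $r\in(0,r_0]$.

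The delicate point is the interpolation, and the way the discretization is arranged so as to survive it: one must test the maximum at the grid time $t_n$ against the radius $r_{n+1}$ of the next, smaller grid point rather than against $r_n$, which is why the relevant probability is $P(M(t_n)>r_{n+1})$; the extra factor $\theta$ this introduces in the tail is harmless for summability. Two routine points still require checking: that $\eta$ is genuinely an fBm (so that self-similarity and the Borell--TIS bound apply verbatim), and that for $r$ small enough $t_r<s_0$ and the path does exceed level $r$ within $[0,s_0]$, so that $\sigma(r)$ really is the first exit time rather than the capped value $s_0$.
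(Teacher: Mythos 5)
Your proof is correct and follows essentially the same route as the paper's: a geometric grid of radii, a Gaussian tail bound for the event that the exit time is too small, Borel--Cantelli, and monotone interpolation between grid points (with the $\delta$-slack absorbing both the interpolation loss and guaranteeing summability). The only difference is cosmetic: where you derive the tail estimate $P(M(1)>x)\le Ce^{-cx^2}$ from self-similarity plus the Borell--TIS inequality, the paper simply cites estimate (3.14) of Wang's paper [W] for the bound $P(\sigma(e^{-m})\le (\log m)^{-1/(2H)-\delta/2}e^{-m/H})\le C\exp(-c(\log m)^{1+\delta H})$.
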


\begin{proof}
Let
\[
A_m
:=
\big \{ \sigma(e^{-m}) \le (\log m)^{-1/(2H)-\delta/2} e^{-m/H}  \big \}.
\]
By using  (3.14) of \cite{W}, we have
\[
P(A_m)\le C\exp(-c (\log m)^{1+\delta H}).
\]
By the Borel-Cantelli lemma, we have
\[
P(\limsup_{m\to \infty} A_m)=0.
\]
Then  for $e^{-m-1} <r \le e^{-m}$, $\sigma$ satisfies
\[
\sigma(r)
\ge  \sigma(e^{-m-1})
\ge (\log (m+1))^{-1/(2H)-\delta/2} e^{-(m+1)/H}
  \ge \{ \log \log (1/r)\}^{-1/(2H)-\delta} r^{1/H}
\]
 for all sufficiently large $m>0$ with probability one.
Therefore, we have
\[
\sigma(r)   \ge \{ \log\log  (1/r)\}^{-1/(2H)-\delta} r^{-1/H}
\]
for all sufficiently small  $r>0$ with probability one.
This proves the desired result.
\end{proof}

 \begin{pro}\label{pA2}
 Suppose that $\xi(t)$ is a sample path
 of the fractional Brownian motion  with the Hurst exponent $H$.
 Then the following holds:
 \begin{description}
\item[\rm (i)]
If $H >1/N$,
for any $\delta>0$, there exists a constant  $r_0(\omega)>0$ such that
\[
\tau(r) \leq  r^{1/H} \{ \log \log (1/r)\} ^{1+\delta}
\]
 for  $r \in (0,r_0]$ with probability one.

\item[\rm (ii)]
If $H=1/N$,  for any $\delta>0$, there
exists a constant  $r_0(\omega)>0$  such that
\[
\tau(r) \leq  r^N ( \log (1/r)) \{ \log \log (1/r)\}^{1+\delta}
\]
 for  $r \in (0,r_0]$ with probability one.

\item[\rm (iii)]
If $H<1/N$,
for any $\delta>0$, there
exists a constant  $r_0(\omega)>0$ such that
\[
\tau(r) \leq  r^{N} \{ \log \log (1/r)\}^{1+\delta}
\]
 for  $r \in (0,r_0]$ with probability one.
\end{description}
\end{pro}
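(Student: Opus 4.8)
The plan is to reduce the whole statement to the occupation time of a single $N$-dimensional fractional Brownian motion near its starting point, to compute its mean, and then to upgrade the mean estimate to an almost-sure bound via a moment estimate and the Borel--Cantelli lemma (mirroring the structure used for Proposition~\ref{pA1}). First I would invoke the time-reversal identity for fBm: the process $s\mapsto \xi(T-s)-\xi(T)$ has the same law as $s\mapsto \xi(s)$ started from $0$, so in distribution $\tau(r)=\int_0^{s_0}1_{\{|\xi(s)|\le r\}}\,ds$. By self-similarity $\xi(s)\stackrel{d}{=}s^H\xi(1)$ and the upper and lower boundedness of the Gaussian density near $0$, the one-point probability obeys $P(|\xi(s)|\le r)\asymp \min\{1,(r\,s^{-H})^N\}$, whence
\[
E[\tau(r)]\asymp M(r):=\int_0^{s_0}\min\{1,\,r^N s^{-NH}\}\,ds.
\]
Splitting the integral at $s=r^{1/H}$ gives $M(r)\asymp r^{1/H}$ when $NH>1$, $M(r)\asymp r^N\log(1/r)$ when $NH=1$, and $M(r)\asymp r^N$ when $NH<1$. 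The key observation is that in all three cases the claimed bound is exactly $M(r)\{\log\log(1/r)\}^{1+\delta}$, so it suffices to prove the single almost-sure statement $\tau(r)\le C\,M(r)\{\log\log(1/r)\}^{1+\delta}$ for all small $r$, and then substitute the three asymptotics of $M(r)$.

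The heart of the argument is a factorial moment bound. Writing
\[
E[\tau(r)^p]=p!\int_{0<s_1<\cdots<s_p<s_0}P\Big(\bigcap_{j=1}^p\{|\xi(s_j)|\le r\}\Big)\,ds_1\cdots ds_p,
\]
I would control the joint probability using the strong local nondeterminism of fBm: for ordered times the conditional variance of $\xi(s_j)$ given $\xi(s_1),\dots,\xi(s_{j-1})$ is bounded below by $\kappa(s_j-s_{j-1})^{2H}$, so the conditional Gaussian density is bounded above by $C(s_j-s_{j-1})^{-NH}$ uniformly in the conditioning values. Conditioning successively from $j=p$ down to $j=1$ yields the product bound $P(\bigcap_j\{|\xi(s_j)|\le r\})\le\prod_{j=1}^p\min\{1,\,C r^N (s_j-s_{j-1})^{-NH}\}$ with the convention $s_0:=0$. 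After the substitution $u_j=s_j-s_{j-1}$ and enlarging the simplex $\{\sum u_j<s_0\}$ to the product $(0,s_0)^p$, the integral factorizes and gives
\[
E[\tau(r)^p]\le p!\,\big(C\,M(r)\big)^p
\]
with the same $M(r)$ as above. This factorial bound produces the finite exponential moment $E[\exp(\tau(r)/(2CM(r)))]\le 2$, hence the exponential tail $P(\tau(r)>\lambda M(r))\le 2e^{-\lambda/(2C)}$ for all $\lambda>0$.

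It then remains to run Borel--Cantelli along the geometric sequence $r_m=e^{-m}$. Taking $\lambda_m=(\log m)^{1+\delta}$ and using $\log\log(1/r_m)=\log m$, the tail bound gives $\sum_m P(\tau(r_m)>\lambda_m M(r_m))\le\sum_m 2e^{-c(\log m)^{1+\delta}}<\infty$ for every $\delta>0$, so almost surely $\tau(r_m)\le M(r_m)(\log m)^{1+\delta}$ for all large $m$. For general small $r$, choosing $m$ with $e^{-m-1}<r\le e^{-m}$ and using that $\tau$ is nondecreasing together with $M(r_m)\asymp M(r)$ and $\log m\asymp\log\log(1/r)$, I recover $\tau(r)\le C\,M(r)\{\log\log(1/r)\}^{1+\delta}$; plugging in the three asymptotics of $M(r)$ yields (i)--(iii).

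The main obstacle is the factorial moment estimate, i.e.\ the product bound on the joint occupation probabilities: this is where the long-range dependence of fBm must be dealt with, and it rests on the strong local nondeterminism property together with the conditioning-independent upper bound on Gaussian conditional densities. The remaining ingredients — the mean computation, the passage from factorial moments to an exponential tail, and the monotone interpolation between the values $r_m$ — are routine once this bound is in hand.
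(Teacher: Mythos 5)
Your proposal is correct and follows essentially the same route as the paper: reduce $\tau(r)$ to the occupation time of an fBm started at the origin, prove factorial moment bounds $E[\tau(r)^p]\le p!\,(CM(r))^p$ using the local nondeterminism of fBm (the paper gets the joint density bound from the covariance determinant estimate of Cs\"org\H{o}--Lin--Shao, which is the same ingredient as your conditional-variance argument), convert to an exponential tail by Chebyshev, and finish with Borel--Cantelli along $r_m=e^{-m}$ plus monotonicity. The only cosmetic difference is that you keep the $\min\{1,\cdot\}$ inside the simplex integral at scale $r$, while the paper rescales to unit radius and splits the simplex according to whether consecutive gaps exceed $1$; both yield the same three asymptotics of $M(r)$.
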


Before giving a proof of Proposition~\ref{pA2},
we complete the proof of Theorem~\ref{th:Brown}.

\begin{proof}[Proof of Theorem~\ref{th:Brown}.]
 By Theorem \ref{th:lower} and Proposition \ref{pA1},
 we obtain the lower bound immediately.
 By Proposition \ref{pA2} and a similar computation
 to the proof of Corollary~\ref{co:bound}, we have
\begin{align*}
 &r^N \int_r^\infty  l^{-N-1} \tau(l) dl  \le Cr^{1/H}\{ \log \log (1/r)\}^{1+\delta}
  \quad \text{ if  } H> 1/N,\\
& r^N \int_r^\infty  l^{-N-1} \tau(l) dl  \le Cr^N \log(1/r)^2 \{ \log \log (1/r)\}^{1+\delta}
   \quad \text{ if  } H= 1/N,\\
&r^N \int_r^\infty  l^{-N-1} \tau(l) dl  \le Cr^{N}\log(1/r) \{ \log \log (1/r)\}^{1+\delta}
  \quad \text{ if  } H< 1/N.
\end{align*}
Thus the desired  bounds are obtained.
 \end{proof}

To show Proposition~\ref{pA2}, we need some preparations.
We consider $ f(x)$ satisfying
\[
 f(x)\ge 0,  \quad  \| f\|_1<\infty,  \quad  \|f\|_\infty<\infty,
\]
and define
\[
   Z:=\dint_0^\infty f (\xi_s)ds,  \qquad Z_t:=\dint_0^t f(\xi_s )ds.
\]

\begin{pro}\label{p1} \
 \begin{description}
\item[\rm (i)]
If  $H>1/N$, then
\[
E[Z^n]\le n! C_1^n
\]
for all $n\in \N$, where $C_1:=\sqrt{\frac{2}{\pi}}  \frac{1}{H N-1} \| f \|_1+\| f\|_\infty$.
\item[\rm (ii)]
If  $H=1/N$, then
\[
E[Z_t^n]\le n! C_2^n (\log t)^n
\]
for all $n\in \N$,
where $C_2:=\sqrt{\frac{2}{\pi}}  \| f \|_1+\| f\|_\infty$.
\item[\rm (iii)]
If  $0<H<1/N$, then
\[
E[Z_t^n]\le n! C_3^n t^{(1-H  N)n}
\]
for all $n\in \N$,
where $C_3:=\sqrt{\frac{2}{\pi}}  \frac{1}{1-H N} \| f \|_1$.
\end{description}
\end{pro}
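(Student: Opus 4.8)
The plan is to reduce all three cases to a single moment expansion and then to control the resulting chain integral by the local nondeterminism of fractional Brownian motion. Since $f\ge 0$, Tonelli's theorem and the symmetry of the integrand give
\[
E[Z^n]=n!\int_{0<s_1<\cdots<s_n<\infty}E\Big[\prod_{i=1}^n f(\xi_{s_i})\Big]\,ds_1\cdots ds_n,
\]
and likewise $E[Z_t^n]=n!\int_{0<s_1<\cdots<s_n<t}E\big[\prod_i f(\xi_{s_i})\big]\,ds$. Thus the entire proposition reduces to estimating the ordered correlation $E[\prod_i f(\xi_{s_i})]$ and integrating that bound over the time simplex.

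The key step is a pointwise estimate of the conditional expectation. Write $\mathcal F_s:=\sigma(\xi_u:u\le s)$ and $s_0:=0$. Because fractional Brownian motion is not Markov, I cannot use a transition density; instead I would invoke its local nondeterminism, which ensures that the conditional law of $\xi_{s_i}$ given $\mathcal F_{s_{i-1}}$ is Gaussian with conditional variance bounded below by a multiple of $(s_i-s_{i-1})^{2H}$ in each coordinate. Hence its conditional density is bounded, uniformly in the conditioning, by a constant times $(s_i-s_{i-1})^{-HN}$, so that almost surely
\[
E\big[f(\xi_{s_i})\,\big|\,\mathcal F_{s_{i-1}}\big]\le \min\Big\{\|f\|_\infty,\ \sqrt{\tfrac{2}{\pi}}\,\|f\|_1\,(s_i-s_{i-1})^{-HN}\Big\}=:g(s_i-s_{i-1}),
\]
the first branch being the trivial bound $f\le\|f\|_\infty$ and the second the density bound together with $\int f=\|f\|_1$. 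Peeling the factors off from the largest time by the tower property (the factors $f(\xi_{s_1}),\dots,f(\xi_{s_{n-1}})$ are $\mathcal F_{s_{n-1}}$-measurable) yields $E[\prod_{i=1}^n f(\xi_{s_i})]\le \prod_{i=1}^n g(s_i-s_{i-1})$.

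Next I would pass to the gap variables $u_1=s_1$, $u_i=s_i-s_{i-1}$, whose Jacobian is one. In case (i) the ordered simplex maps bijectively onto $(0,\infty)^n$, and in cases (ii)--(iii) onto $\{u_i>0,\ \sum u_i<t\}\subset(0,t)^n$; in all cases the bound factorizes, so it remains to evaluate the single-gap integral $\int g(u)\,du$. Splitting the minimum at $u=1$ gives, for $HN>1$, $\int_0^\infty g\le \|f\|_\infty+\sqrt{2/\pi}\,\|f\|_1/(HN-1)=C_1$, whence $E[Z^n]\le n!\,C_1^n$; for $HN=1$, $\int_0^t g\le \|f\|_\infty+\sqrt{2/\pi}\,\|f\|_1\log t\le C_2\log t$ once $t\ge e$ (so $\log t\ge 1$ absorbs the $\|f\|_\infty$ term); and for $HN<1$ only the $\|f\|_1$ branch is integrable near the origin, giving $\int_0^t g\le C_3\,t^{1-HN}$ via $\int_0^t u^{-HN}\,du=t^{1-HN}/(1-HN)$. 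Multiplying the $n$ single-gap bounds and retaining the combinatorial $n!$ produces the three stated inequalities.

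The main obstacle is precisely the conditional density estimate: since fBm has dependent increments, the lower bound on the conditional variance of $\xi_{s_i}$ given the \emph{entire} past $\mathcal F_{s_{i-1}}$—and, more delicately, the exact constant $\sqrt{2/\pi}$ appearing in $g$—must be extracted from the local nondeterminism of the process rather than from any Markov or independent-increment argument. Everything downstream is elementary bookkeeping of one-dimensional integrals, together with the harmless observation that the simplex sits inside the product box so that the factorized bound applies.
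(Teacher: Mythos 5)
Your proof is correct and lands on the same constants, but it is organized differently from the paper's. The paper also starts from $E[Z_t^n]=n!\,a_n(t)$ with $a_n(t)$ the ordered integral, but its key input is a bound on the \emph{joint} density of the increments $(\xi_{t_1},\xi_{t_2}-\xi_{t_1},\dots,\xi_{t_n}-\xi_{t_{n-1}})$, obtained from the determinant lower bound $\det L_n\ge 2^{-n}\prod_j(t_j-t_{j-1})^{2H}$ of Cs\"org\H{o}--Lin--Shao; this yields $E[\prod_j f(\xi_{t_j})]\le(2/\pi)^{nN/2}\prod_j(t_j-t_{j-1})^{-HN}\|f\|_1^n$ in one stroke. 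The non-integrability of $u^{-HN}$ at $u=0$ in cases (i)--(ii) is then handled combinatorially: the simplex is split according to which gaps exceed $1$, the bound $\|f\|_\infty$ is used on the short gaps and the density bound on the long ones, and the binomial sum $\sum_k\binom{n}{k}\|f\|_\infty^k\hat C_1^{n-k}=C_1^n$ closes the argument. You instead peel off one factor at a time with the tower property using a conditional density bound, and you absorb the short-gap problem into the pointwise minimum $g(u)=\min\{\|f\|_\infty,\sqrt{2/\pi}\,\|f\|_1u^{-HN}\}$, which reduces the bookkeeping to a clean product of one-dimensional integrals --- arguably tidier than the paper's $2^n$-region decomposition. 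The one place where you take on more than necessary is the conditioning: you condition on the full past $\mathcal{F}_{s_{i-1}}$, which requires one-sided strong local nondeterminism of fBm (true, but a heavier input, and enlarging the conditioning $\sigma$-field only \emph{decreases} the conditional variance, so the constant does not come for free); since $f(\xi_{s_1}),\dots,f(\xi_{s_{i-1}})$ are already measurable with respect to $\sigma(\xi_{s_1},\dots,\xi_{s_{i-1}})$, you could condition on that finite $\sigma$-field instead, and then the required variance lower bound is exactly the ratio of successive determinants in the Cs\"org\H{o}--Lin--Shao lemma the paper cites, which also pins down the constant $\sqrt{2/\pi}$ that you flagged as the delicate point. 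With that substitution the two proofs rest on the same lemma, and your version goes through.
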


We can write
\[
E[Z^n_t]
=\int_{0<t_1<t} \ldots \int_{0<t_n<t}
E[f (\xi_{t_1}) \ldots f(\xi_{t_n})]
dt_1 \ldots dt_n
= n! a_n(t),
\]
where
\[
a_n(t)
:=\int \ldots \int_{0< t_1<\cdots <t_n<t}
E[ f(\xi_{t_1}) \ldots f(\xi_{t_n})]
dt_1 \ldots dt_n.
\]
In addition, let
\begin{align*}
b_n(t)
:=&\int \ldots \int_{\substack{0< t_1<\cdots <t_n<t,\\ t_j-t_{j-1}>1}}
E[f(\xi_{t_1}) \ldots f(\xi_{t_n})]
dt_1 \ldots dt_n,\\
b_0(t):=&1,\\
t_0:=&0,
\end{align*}
and $a_n:=a_n(\infty)$, $b_n:=b_n(\infty)$.

To show Proposition \ref{p1}, we give the following lemmas.
\begin{lem}\label{lem2}
The density of $\xi_{t_1}$, $\xi_{t_2}-\xi_{t_1}$, \ldots, $\xi_{t_n}-\xi_{t_{n-1}}$ satisfies
\[
p_N(x_1,\ldots x_n)
\le
  \bigg(\frac{2}{\pi}\bigg)^{nN/2} \prod_{j=1}^n(t_j-t_{j-1})^{-H N}.
\]
\end{lem}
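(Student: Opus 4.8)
The plan is to reduce the whole statement to a single one-dimensional conditional-variance estimate. Since the $N$-dimensional fractional Brownian motion $\xi=(\xi^{(1)},\dots,\xi^{(N)})$ has independent coordinates, each a one-dimensional fBm with the same Hurst exponent $H$, the joint density factorizes across coordinates:
\[
p_N(x_1,\dots,x_n)=\prod_{i=1}^N p_1\big(x_1^{(i)},\dots,x_n^{(i)}\big),
\]
where $x_j^{(i)}$ is the $i$-th coordinate of $x_j$ and $p_1$ is the density of the scalar increment vector. Hence it suffices to prove the scalar bound $p_1(x_1,\dots,x_n)\le (2/\pi)^{n/2}\prod_{j=1}^n (t_j-t_{j-1})^{-H}$ and then take the product over the $N$ coordinates.

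For the scalar statement I would use the chain rule for densities together with Gaussianity. Writing $Y_j:=\xi_{t_j}-\xi_{t_{j-1}}$ (with $t_0=0$, so $Y_1=\xi_{t_1}$) and $p_1(x_1,\dots,x_n)=\prod_{j=1}^n p(x_j\mid x_1,\dots,x_{j-1})$, each conditional law is Gaussian with conditional variance $v_j:=\mathrm{Var}(Y_j\mid Y_1,\dots,Y_{j-1})$, and a Gaussian density never exceeds its peak, so $p(x_j\mid x_1,\dots,x_{j-1})\le (2\pi v_j)^{-1/2}$ uniformly in the conditioning values. Since $\sigma(Y_1,\dots,Y_{j-1})=\sigma(\xi_{t_1},\dots,\xi_{t_{j-1}})$ and $\xi_{t_{j-1}}$ is measurable with respect to this field, $v_j=\mathrm{Var}(\xi_{t_j}\mid \xi_{t_1},\dots,\xi_{t_{j-1}})$. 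Multiplying the conditional bounds reduces the lemma to the local-nondeterminism estimate
\[
v_j\ge \tfrac14 (t_j-t_{j-1})^{2H}.
\]

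To obtain this lower bound I would invoke the Mandelbrot--Van Ness representation $\xi_t=c_H\int_{-\infty}^t\big[(t-u)_+^{H-1/2}-(-u)_+^{H-1/2}\big]\,dB_u$, with $c_H$ the normalizing constant fixing $\mathrm{Var}(\xi_1)=1$, and split the stochastic integral at $t_{j-1}$. Because the kernel $(-u)_+^{H-1/2}$ vanishes on $(t_{j-1},t_j)$ and $\xi_{t_{j-1}}$ contributes nothing there, the increment decomposes as $\xi_{t_j}-\xi_{t_{j-1}}=M_j+I_j$, where $M_j$ is measurable with respect to $\mathcal F_{t_{j-1}}=\sigma(B_u:u\le t_{j-1})$ and $I_j=c_H\int_{t_{j-1}}^{t_j}(t_j-u)^{H-1/2}\,dB_u$ is independent of $\mathcal F_{t_{j-1}}$. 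Since conditioning on the larger field $\mathcal F_{t_{j-1}}\supseteq\sigma(\xi_{t_1},\dots,\xi_{t_{j-1}})$ only decreases the conditional variance, $v_j\ge \mathrm{Var}(\xi_{t_j}-\xi_{t_{j-1}}\mid \mathcal F_{t_{j-1}})=\mathrm{Var}(I_j)=\frac{c_H^2}{2H}(t_j-t_{j-1})^{2H}$.

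The final step, which is the main obstacle, is to check that this constant meets the target, i.e.\ that $\frac{c_H^2}{2H}\ge\frac14$ for every $H\in(0,1/2]$. With the chosen normalization this is the scalar inequality $c_H^{-2}=\int_0^\infty\big((1+s)^{H-1/2}-s^{H-1/2}\big)^2\,ds+\frac{1}{2H}\le\frac2H$, equivalently $\int_0^\infty\big((1+s)^{H-1/2}-s^{H-1/2}\big)^2\,ds\le\frac{3}{2H}$. The integrand behaves like $s^{2H-1}$ near $0$ (contributing about $1$ after multiplication by $2H$) and decays like $s^{2H-3}$ at infinity, so the integral is finite and the bound holds with room to spare; verifying it uniformly in $H$ is the one genuinely computational point, and it is what lets a clean, $H$-independent constant $2/\pi$ be extracted. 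If one prefers to avoid the explicit constant tracking, the same type of lower bound $v_j\ge c_H(t_j-t_{j-1})^{2H}$ is available from the strong local nondeterminism of fBm, at the cost of a less explicit constant.
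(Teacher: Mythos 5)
Your proposal is correct, but it takes a genuinely different route from the paper. The paper reduces to one dimension by coordinate independence exactly as you do, but then it simply bounds the density peak by $(2\pi)^{-n/2}(\det L_n)^{-1/2}$ and quotes Cs\"org\H{o}--Lin--Shao \cite[Lemma 3.3]{CS} for the determinant lower bound $\det L_n\ge 2^{-n}\prod_{j}(t_j-t_{j-1})^{2H}$; the whole proof is a two-line citation. You instead factor the density through conditional variances (equivalently, $\det L_n=\prod_j v_j$) and prove the needed one-step local nondeterminism bound $v_j\ge\frac14(t_j-t_{j-1})^{2H}$ from scratch via the Mandelbrot--Van Ness representation, so your argument is a self-contained replacement for the external lemma; your constant gives $\det L_n\ge 4^{-n}\prod_j(t_j-t_{j-1})^{2H}$, slightly weaker than the cited $2^{-n}$, but it lands exactly on the stated constant $(2/\pi)^{nN/2}$. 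The one step you leave as a "computational point," namely $c_H^2/(2H)\ge\frac14$ uniformly for $H\in(0,1/2]$, does hold and closes in two lines: writing $\beta=\tfrac12-H$, the bound $0\le(1+s)^{-\beta}\le s^{-\beta}$ gives $\int_0^1\big(s^{-\beta}-(1+s)^{-\beta}\big)^2ds\le\int_0^1 s^{2H-1}ds=\tfrac1{2H}$, while $s^{-\beta}-(1+s)^{-\beta}\le\beta s^{-\beta-1}$ gives $\int_1^\infty\le\beta^2/(2\beta+1)\le\tfrac18$, so $c_H^{-2}\le\tfrac1{2H}+\tfrac1{2H}+\tfrac18\le\tfrac2H$ as required. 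So there is no genuine gap, only a flagged verification that checks out; the trade-off is that the paper's proof is shorter but rests on a reference, while yours is longer but elementary and makes the local-nondeterminism mechanism explicit.
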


\begin{proof}
Let $L_n(t_1,\ldots, t_n)$ be the covariance matrix of
$\xi^1_{t_1}$, $\xi^1_{t_2}-\xi^1_{t_1}$, \ldots, $\xi^1_{t_n}-\xi^1_{t_{n-1}}$,
where $\xi^1$ is a sample path of the one-dimensional fractional Brownian motion.

By \cite[Lemma 3.3]{CS},  we have
\[
\text{det}L_n(t_1,\ldots, t_n)
\ge \frac{1}{2^n} \prod_{j=1}^n(t_j-t_{j-1})^{2H},
\]
where $t_0=0$.
Then the density of $\xi^1_{t_1}$, $\xi^1_{t_2}-\xi^1_{t_1}$,
\ldots , $\xi^1_{t_n}-\xi^1_{t_{n-1}}$ satisfies
\[
p(x_1,\ldots x_n)
\le
 \bigg(\frac{2}{\pi}\bigg)^{n/2} \prod_{j=1}^n(t_j-t_{j-1})^{-H},
\]
and hence we obtain the desired result.
\end{proof}

\begin{lem}\label{lem1}
If $H>1/N$, then
\[
b_n \le n! \hat{C}_1^n
\]
for all $n\in \N$, where $\hat{C}_1=\sqrt{\frac{2}{\pi}}  \frac{1}{H N-1} \| f \|_1$.
\end{lem}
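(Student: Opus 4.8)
The plan is to bound $b_n$ directly, by (a) passing to the gap variables in time, (b) passing to the increments in space and invoking the density bound of Lemma~\ref{lem2}, and (c) integrating the spatial and temporal variables separately. First I would change variables in the time integration from $t_1,\dots,t_n$ to the gaps $u_j:=t_j-t_{j-1}$ ($j=1,\dots,n$, with $t_0=0$). This change has Jacobian $1$, the ordering $0<t_1<\cdots<t_n$ becomes automatic, and the separation constraint $t_j-t_{j-1}>1$ turns the domain of integration defining $b_n$ into the product set $\{u_1>1\}\times\cdots\times\{u_n>1\}$.

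Next I would write the correlation $E[f(\xi_{t_1})\cdots f(\xi_{t_n})]$ in terms of the joint density $p_N$ of the increments $\xi_{t_1},\,\xi_{t_2}-\xi_{t_1},\,\dots,\,\xi_{t_n}-\xi_{t_{n-1}}$. Denoting the $j$-th increment by $x_j$, so that $\xi_{t_j}=x_1+\cdots+x_j$, this correlation equals
\[
\int f(x_1)\,f(x_1+x_2)\cdots f(x_1+\cdots+x_n)\,p_N(x_1,\dots,x_n)\,dx_1\cdots dx_n .
\]
Bounding $p_N$ pointwise by Lemma~\ref{lem2} pulls the $x$-independent factor $(2/\pi)^{nN/2}\prod_{j=1}^n u_j^{-HN}$ outside the integral and leaves $\int f(x_1)f(x_1+x_2)\cdots f(x_1+\cdots+x_n)\,dx$. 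Integrating this telescoping product one variable at a time, from $x_n$ down to $x_1$ and translating at each step, collapses it to $\|f\|_1^{\,n}$.

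Combining these two steps gives
\[
b_n\le\Big(\tfrac{2}{\pi}\Big)^{nN/2}\|f\|_1^{\,n}\Big(\int_1^\infty u^{-HN}\,du\Big)^{n},
\]
and here the hypothesis $H>1/N$, i.e.\ $HN>1$, is exactly what makes each gap integral converge, with $\int_1^\infty u^{-HN}\,du=1/(HN-1)$. Hence $b_n\le\big[(2/\pi)^{N/2}\|f\|_1/(HN-1)\big]^n$. To reach the stated bound I would absorb the spatial constant using that $2/\pi<1$ and $N\ge1$ force $(2/\pi)^{N/2}\le(2/\pi)^{1/2}$, so that $b_n\le\hat{C}_1^{\,n}\le n!\,\hat{C}_1^{\,n}$.

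I expect the only delicate point to be the clean collapse of the spatial integral to $\|f\|_1^{\,n}$: one must check that replacing the joint increment density by the $x$-independent bound of Lemma~\ref{lem2} is legitimate (it is, since that bound is a pointwise supremum bound on $p_N$), and that the iterated translations evaluating $\int\prod_{j=1}^n f(x_1+\cdots+x_j)\,dx$ are justified, which is immediate from Tonelli since $f\ge0$. The temporal integration and the constant bookkeeping are then routine; I note that the factor $n!$ in the statement is slack, the argument in fact yielding the stronger estimate $b_n\le\hat{C}_1^{\,n}$.
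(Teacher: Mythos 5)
Your proposal is correct and follows essentially the same route as the paper: bound the correlation $E[f(\xi_{t_1})\cdots f(\xi_{t_n})]$ via the density estimate of Lemma~\ref{lem2} and the telescoping spatial integral equal to $\|f\|_1^n$, then integrate the gap variables over $\{u_j>1\}$ to get $(HN-1)^{-n}$. The observation that the $n!$ is slack and that $(2/\pi)^{N/2}\le(2/\pi)^{1/2}$ absorbs the constant is also exactly what the paper's choice of $\hat{C}_1$ relies on.
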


\begin{proof}
First we note that
\[
\begin{aligned}
E[f(\xi_{t_1}) \ldots f (\xi_{t_n})]
& \le \bigg(\frac{2}{\pi}\bigg)^{nN/2}  \prod_{j=1}^n(t_j-t_{j-1})^{-H N}
\\
&\qquad  \times
\int_{\mathbb{R}^N} \ldots \int_{\mathbb{R}^N}
f (x_1)f(x_1+x_2)\ldots f(x_1+\cdots +x_n)
dx_1 \ldots dx_n\\
& =  \bigg(\frac{2}{\pi}\bigg)^{nN/2}  \prod_{j=1}^n(t_j-t_{j-1})^{-H N}
\| f \|_1^n.
\end{aligned}
\]
Then we have
\[
\begin{aligned}
b_n
&  \le \bigg(\frac{2}{\pi}\bigg)^{nN/2}
\| f \|_1^n
\int \ldots \int_{\substack{ 0< t_1<\cdots <t_n, \\ t_j-t_{j-1}>1}}
 \prod_{j=1}^n(t_j-t_{j-1})^{-H N}
dt_1 \ldots dt_n\\
&  =  \bigg(\frac{2}{\pi}\bigg)^{nN/2}
\| f \|_1^n
(H N-1)^{-n}.
\end{aligned}
\]
Hence we obtain the desired inequality.
\end{proof}

\begin{lem}\label{lem3}
If $H=1/N$, then
\[
b_n(t) \le n! \hat{C}_2^n(\log t)^n
\]
for all $n\in \N$, where $\hat{C}_2=\sqrt{\frac{2}{\pi}}  \| f \|_1 $.
\end{lem}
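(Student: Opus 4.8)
The plan is to follow the same route as the proof of Lemma~\ref{lem1}, specialized to the critical exponent $HN=1$, at which the gap weight becomes exactly $(t_j-t_{j-1})^{-1}$. First I would invoke the density estimate of Lemma~\ref{lem2}: since $HN=1$, the same computation as at the beginning of the proof of Lemma~\ref{lem1} gives the pointwise bound
\[
E[f(\xi_{t_1}) \ldots f(\xi_{t_n})]
\le \bigg(\frac{2}{\pi}\bigg)^{nN/2}\| f \|_1^n \prod_{j=1}^n(t_j-t_{j-1})^{-1},
\]
where $t_0=0$. Substituting this into the definition of $b_n(t)$ reduces the problem to estimating the deterministic multiple integral
\[
\int \ldots \int_{\substack{0< t_1<\cdots <t_n<t,\\ t_j-t_{j-1}>1}}
\prod_{j=1}^n(t_j-t_{j-1})^{-1}\,dt_1 \ldots dt_n.
\]

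Next I would change variables to the consecutive gaps $u_j:=t_j-t_{j-1}$ for $j=1,\ldots,n$, whose Jacobian is $1$. The ordering $t_1<\cdots<t_n$ becomes automatic (each $u_j>0$), so the constraints turn into $u_j>1$ for every $j$ together with $\sum_{j=1}^n u_j=t_n<t$, and the integral becomes $\int_{\{u_j>1,\ \sum_k u_k<t\}}\prod_{j=1}^n u_j^{-1}\,du$. The key simplification is that this region is contained in the product region $\{1<u_j<t:\ j=1,\ldots,n\}$, because $u_j>1$ and $\sum_k u_k<t$ force $u_j<t$. Since the integrand is nonnegative, I can enlarge the domain to this product and factorize, obtaining $\prod_{j=1}^n\int_1^t u_j^{-1}\,du_j=(\log t)^n$. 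Combining the two estimates yields
\[
b_n(t)\le \bigg(\frac{2}{\pi}\bigg)^{nN/2}\| f \|_1^n(\log t)^n
\le n!\,\hat{C}_2^{\,n}(\log t)^n ,
\]
which is the asserted bound.

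The step that really does the work is the gap substitution together with the cutoff $t_j-t_{j-1}>1$: at the critical exponent the weight $u^{-1}$ is not integrable near $u=0$, so the very reason $b_n(t)$ (rather than $a_n(t)$) is introduced is to replace the divergent $\int_0 u^{-1}\,du$ by the convergent $\int_1^t u^{-1}\,du=\log t$, which is precisely what produces the logarithmic factor. I do not expect a genuine obstacle beyond bookkeeping; the only point requiring a little care is the legitimacy of enlarging the simplex-type region $\{u_j>1,\ \sum_k u_k<t\}$ to the product $\{1<u_j<t\}$, which is valid because each coordinate is automatically bounded by $t$. The harmless mismatch between $(2/\pi)^{N/2}$ and the stated $\hat{C}_2=\sqrt{2/\pi}\,\|f\|_1$ is absorbed by the extra factor $n!\ge1$ present in the target bound.
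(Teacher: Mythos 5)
Your argument is correct and follows essentially the same route as the paper: the density bound from Lemma~\ref{lem2} with $HN=1$, followed by bounding the constrained multiple integral by $(\log t)^n$ (the paper states this integral bound without writing out the gap substitution, which you supply, and your handling of the constant via $(2/\pi)^{nN/2}\le(2/\pi)^{n/2}$ or the spare $n!$ is fine). No issues.
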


\begin{proof}
It suffices to show
$
a_n\le C_1^n.
$
Since
\[
E[f(\xi_{t_1}) \ldots f(\xi_{t_n})]\\
\le  \bigg(\frac{2}{\pi}\bigg)^{nN/2}  \prod_{j=1}^n(t_j-t_{j-1})^{-H N}
\| f \|_1^n,
\]
we have
\[
\begin{aligned}
b_n(t)
\le & \bigg(\frac{2}{\pi}\bigg)^{nN/2}
\| f \|_1^n
\int \ldots \int_{\substack{ 0< t_1<\cdots <t_n<t, \\t_j-t_{j-1}>1}}
 \prod_{j=1}^n(t_j-t_{j-1})^{-H N}
dt_1 \ldots dt_n\\
\le &  \bigg(\frac{2}{\pi}\bigg)^{nN/2}
\| f \|_1^n
(\log t)^n.
\end{aligned}
\]
Hence we obtain the desired result.
\end{proof}

Now, let us prove Proposition \ref{p1} by using the above lemmas.

\begin{proof}
[Proof of Proposition \ref{p1}]
First we show (i).
When $n=1$,
\[
\begin{aligned}
a_1
&=  \int_0^\infty E[f(\xi_{t_1})]dt_1\\
&= \int_0^1 E[f(\xi_{t_1})]dt_1
+\int_1^\infty E[f(\xi_{t_1})]dt_1\\
& \le \|f\|_\infty +b_1
\le \|f\|_\infty +\hat{C}_1
\le C_1.
\end{aligned}
\]
When $n=2$,
\[
\begin{aligned}
a_2
&=\int \int_{ 0<t_1<t_2} E[f(\xi_{t_1}) f(\xi_{t_2})]dt_1dt_2\\
&=\int \int_{\substack{ 0<t_1<t_2, t_1<1,\\ t_2-t_1<1}} E[f(\xi_{t_1}) f(\xi_{t_2})]dt_1dt_2
+\int \int_{\substack{ 0<t_1<t_2-1,\\ t_1<1}} E[f(\xi_{t_1}) f(\xi_{t_2})]dt_1dt_2\\
& \qquad +\int \int_{\substack{ 1\le t_1<t_2,
\\t_2-t_1<1}} E[f(\xi_{t_1}) f(\xi_{t_2})]dt_1dt_2
+\int \int_{1\le t_1<t_2-1} E[f(\xi_{t_1}) f(\xi_{t_2})]dt_1dt_2\\
& \le \|f\|_\infty^2 +2\|f\|_\infty b_1+b_2\\
& \le \|f\|_\infty^2 +2\|f\|_\infty \hat{C}_1+\hat{C}_1^2\\
& \le C_1^2.
\end{aligned}
\]
For general $n$, by the same computation as above, we have
\begin{align*}
a_n
\le  \sum_{k=0}^n {}_nC_k \| f \|_\infty^k \hat{C}_1^{n-k}
=(\|f \|_\infty+\hat{C}_1 )^n
=C_1^n
\end{align*}
and hence we obtain (i).

By the same way as the proof of  (i),
we have (ii) with the aid of Lemma \ref{lem3}.
Moreover, from
\[
E[f(\xi_{t_1}) \ldots f(\xi_{t_n})]
\le   \bigg(\frac{2}{\pi}\bigg)^{nN/2}  \prod_{j=1}^n(t_j-t_{j-1})^{-H N}
\| f \|_1^n,
\]
we see that
\[
\begin{aligned}
a_n(t)
& \le  \bigg(\frac{2}{\pi}\bigg)^{nN/2}
\| f \|_1^n
\int \ldots \int_{0< t_1<\cdots <t_n<t}
 \prod_{j=1}^n(t_j-t_{j-1})^{-H N}
dt_1 \ldots dt_n\\
& \le   \bigg(\frac{2}{\pi}\bigg)^{nN/2}
\| f \|_1^n
(1-H N)^{-n}
 t^{(1-H N)n}.
\end{aligned}
\]
Therefore, (iii) holds.
\end{proof}

\begin{lem}\label{le:1} \
 \begin{description}
\item[\rm (i)]
If $H>1/N$, then there exists $C<\infty$ such that
\[
E[\tau(e^{-m})^n]\le n! C^ne^{-m n/H}
\]
for all $m,n\in \N$.
\item[\rm (ii)] If $H=1/N$, then  there exists $C<\infty$ such that
\[
E[\tau(e^{-m})^n]\le n! C^n e^{-m nN} m^n
\]
for all $m,n\in \N$.
\item[\rm (iii)] If $H<1/N$, then there exists $C<\infty$ such that
\[
E[\tau(e^{-m})^n]\le n! C^n e^{-mNn}
\]
for all $m,n\in \N$.
\end{description}
\end{lem}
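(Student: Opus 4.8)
The plan is to reduce everything to the occupation-time estimates of Proposition~\ref{p1} via two distributional identities: time reversal and self-similarity. First I would observe that, since the fractional Brownian motion has mean zero and stationary increments, a direct covariance computation shows that the reversed process $\{\xi(T-s)-\xi(T)\}_{s\ge 0}$ is itself a fractional Brownian motion with the same Hurst exponent $H$. Consequently the occupation time
\[
\tau(r)=\int_0^{s_0} 1_{\{|\xi(T-s)-\xi(T)|\le r\}}\,ds
\]
has the same law as $\int_0^{s_0} 1_{\{|\xi_s|\le r\}}\,ds$. I would then take $f=1_{B_0^1}$, the indicator of the unit ball, which satisfies $f\ge0$, $\|f\|_1=|B_0^1|<\infty$ and $\|f\|_\infty=1<\infty$, so that $Z_t=\int_0^t 1_{\{|\xi_s|\le 1\}}\,ds$ and $Z=Z_\infty$ are precisely the quantities controlled in Proposition~\ref{p1}.

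The key step is a scaling identity. Using the self-similarity $\{\xi_{as}\}_s\stackrel{d}{=}\{a^H\xi_s\}_s$ with $a=e^{-m/H}$ together with the substitution $s=e^{-m/H}t$, I would show
\[
\tau(e^{-m})\stackrel{d}{=}\int_0^{s_0}1_{\{|\xi_s|\le e^{-m}\}}\,ds
= e^{-m/H}\int_0^{s_0 e^{m/H}}1_{\{|\xi_t|\le 1\}}\,dt
= e^{-m/H}\,Z_{s_0 e^{m/H}} ,
\]
so that $E[\tau(e^{-m})^n]=e^{-mn/H}\,E\big[Z_{s_0 e^{m/H}}^n\big]$. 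It then remains to insert the three cases of Proposition~\ref{p1}.

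In case (i), $H>1/N$, I would bound $Z_{s_0 e^{m/H}}\le Z$ and use $E[Z^n]\le n!\,C_1^n$, which gives $E[\tau(e^{-m})^n]\le n!\,C_1^n e^{-mn/H}$ directly. In case (iii), $H<1/N$, inserting $E[Z_t^n]\le n!\,C_3^n t^{(1-HN)n}$ with $t=s_0 e^{m/H}$ produces the exponent $-n/H+(1/H-N)n=-Nn$, yielding the claimed bound $n!\,C^n e^{-mNn}$. The only case requiring a little care is (ii), $H=1/N$, where $E[Z_t^n]\le n!\,C_2^n(\log t)^n$ with $t=s_0 e^{mN}$ gives $(\log t)^n\le (Cm)^n$ for large $m$, which is exactly the extra factor $m^n$, while $e^{-mn/H}=e^{-mnN}$. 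I expect the bookkeeping of these exponents, and the translation of $\log(s_0 e^{mN})$ into the factor $m$ in case (ii), to be the only real obstacle, since the genuine probabilistic content is already packaged in Proposition~\ref{p1}.
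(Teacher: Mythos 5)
Your proposal is correct and follows essentially the same route as the paper: reduce $\tau(e^{-m})$ in law to the occupation time of the unit ball by an fBm started at the origin (via stationary increments/time reversal), rescale by self-similarity to get $e^{-m/H}Z_{s_0e^{m/H}}$, and then invoke Proposition~\ref{p1} with $f=1_{\{|x|\le 1\}}$ in each of the three regimes. The exponent bookkeeping in cases (ii) and (iii) is exactly as you describe, so nothing further is needed.
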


\Proof
Note that by the stationary increment property of the fractional Brownian motion,
\[
\tau(e^{-m})
=_d \int_0^{s_0} 1_{\{|\xi(s)| \le e^{-m}\} }ds,
\]
where $X=_d Y$ means that $X$ has same distribution as $Y$.
In addition, by the scaling property,  we have
\begin{align*}
&\int_0^{s_0} 1_{\{|\xi(s)| \le e^{-m}\} }ds
=_d e^{-m/H} \int_0^{s_0 \exp(m/H)} 1_{\{|\xi(s)| \le 1\} }ds.
\end{align*}
Note that $\|  f \| _1 <\infty$  and $\| f\|_\infty <\infty$ hold.
Thus, if we substitute $1_{\{|x|\le 1\}}$ for $f$ in Proposition \ref{p1},
we obtain the desired result.
\Qed

Finally, we give the following lemma.

\begin{lem}\label{ine1} \
\begin{description}
\item[\rm (i)]
If $H>1/N$,  there exist $C, c>0$ such that
\[
P( \tau(e^{-m}) \ge M e^{-m/H}  )
 \le  Ce^{-cM}
\]
for any $M<\infty$ and  $m\in \N$.
\item[\rm (ii)]
If $H=1/N$,  there exist $C, c>0$ such that
\[
P( \tau(e^{-m}) \ge M e^{-m N} m )
 \le  Ce^{-cM}
\]
for any $M<\infty$ and  $m\in \N$.
\item[\rm (iii)]
  If $H<1/N$,  there exist $C, c>0$ such that
\[
P( \tau(e^{-m}) \ge M e^{-mN} )
 \le  Ce^{-cM}
\]
for any $M<\infty$ and  $m\in \N$.
\end{description}
\end{lem}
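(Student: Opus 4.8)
The plan is to derive the three exponential tail bounds from the factorial moment estimates of Lemma~\ref{le:1} by a standard Chernoff (exponential Markov) argument. The crucial observation is that, in all three regimes, the bound of Lemma~\ref{le:1} can be written in the single uniform form
\[
E[\tau(e^{-m})^n] \le n!\,(C\lambda_m)^n, \qquad n \in \N,
\]
where $C$ is the constant from Lemma~\ref{le:1} (independent of $m$) and the scale $\lambda_m$ equals $e^{-m/H}$ when $H>1/N$, equals $e^{-mN}m$ when $H=1/N$, and equals $e^{-mN}$ when $H<1/N$. Since $\tau(e^{-m})\ge 0$, I would first assemble the exponential moment by summing the Taylor series term by term, which is legitimate because every summand is nonnegative (monotone convergence): for any $t>0$ with $tC\lambda_m<1$,
\[
E\big[\exp(t\,\tau(e^{-m}))\big]
= \sum_{n=0}^\infty \frac{t^n}{n!}\,E[\tau(e^{-m})^n]
\le \sum_{n=0}^\infty (tC\lambda_m)^n
= \frac{1}{1-tC\lambda_m}.
\]

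Next I would apply the exponential Markov inequality at the threshold $M\lambda_m$ appearing in each part of the statement: for any admissible $t$,
\[
P\big(\tau(e^{-m}) \ge M\lambda_m\big)
\le e^{-tM\lambda_m}\,E\big[\exp(t\,\tau(e^{-m}))\big]
\le \frac{e^{-tM\lambda_m}}{1-tC\lambda_m}.
\]
Choosing $t=1/(2C\lambda_m)$, so that $tC\lambda_m=\tfrac12$ and $t$ is admissible, gives
\[
P\big(\tau(e^{-m})\ge M\lambda_m\big) \le 2\exp\!\Big(-\frac{M}{2C}\Big).
\]
This is precisely the asserted inequality, with the generic constants in the statement taken to be $2$ and $1/(2C)$; since neither the moment constant $C$ nor the choice $t=1/(2C\lambda_m)$ introduces any additional dependence on $m$, the bound holds uniformly in $m\in\N$. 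The three parts (i), (ii), (iii) then follow at once by substituting the corresponding scale $\lambda_m=e^{-m/H}$, $e^{-mN}m$, and $e^{-mN}$, respectively.

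I do not expect a genuine obstacle here, as the entire analytic content is already contained in the moment bounds of Lemma~\ref{le:1}; what remains is the routine passage from factorial moments to a finite exponential moment. The only points that require mild care are the justification of the term-by-term summation of the moment generating function, handled by the nonnegativity of $\tau$ together with monotone convergence, and the need to keep the geometric series convergent, which is guaranteed by the choice $t=1/(2C\lambda_m)$ forcing $tC\lambda_m=\tfrac12<1$.
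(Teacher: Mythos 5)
Your argument is correct and follows essentially the same route as the paper: both derive a uniform-in-$m$ bound on the exponential moment $E[\exp(c\,\tau(e^{-m})/\lambda_m)]$ from the factorial moment estimates of Lemma~\ref{le:1} via the geometric series, and then conclude by the exponential Chebyshev (Markov) inequality. Your version merely spells out the choice $t=1/(2C\lambda_m)$ and the monotone-convergence justification that the paper leaves implicit.
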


\begin{proof}
First, we show (i).
By Lemma \ref{le:1}, there exist
 $c>0$ and $C<\infty$ such that for any $m\in \N$
\[
E[\exp(c\tau(e^{-m}) e^{m/H} ) ]< C.
\]
Thus, by Chebyshev's inequality,
we obtain
\begin{align*}
P(\tau(e^{-m}) \ge M e^{-m/H})
=&P(\exp(c\tau(e^{-m}) e^{m/H}) \ge e^{cM} )
\\
\le&Ce^{-cM} E[\exp(c\tau(e^{-m}) e^{m/H}) ]
\\
\le&Ce^{-cM}.
\end{align*}
Similarly, we have (ii) and (iii).
\end{proof}


Now, we are in a position to show the main proposition in this section.

\begin{proof}[Proof of Proposition \ref{pA2}]
First, we consider the case of $H>1/N$.
Let
\[
A_m
:=
\{ \tau(e^{-m})  \ge (\log m)^{1+\delta/2} e^{-m/H}  \}.
\]
By Lemma \ref{ine1},
\[
P(A_m)\le C\exp(-c (\log m)^{1+\delta/2}).
\]
By the Borel-Cantelli lemma, we have
\[
P(\limsup_{m\to \infty} A_m)=0.
\]
Hence  if $e^{-m-1} <r \le e^{-m}$, then
\[
\tau(r)
\le \tau (e^{-m})
\le(\log m)^{1+\delta/2} e^{-m/H}
\le \{ \log \log (1/r)\}^{1+\delta} r^{1/H}
\]
for all sufficiently large $m$ with probability one.
Thus we obtain
\[
\tau(r) \le \{ \log \log (1/r)\}^{1+\delta} r^{1/H}
\]
for all sufficiently small  $r>0$ with probability one.
This proves (i).

The assertions  (ii) and (iii) can be proved in the  same way.
\end{proof}


\

\noindent
{\bf Acknowledgements.}
MF was partly supported by Grant-in-Aid for JSPS Research Fellow (No.~JP20J20941).
IO was supported by
JSPS  KAKENHI Grant-in-Aid  for Early-Career Scientists (No.~JP20K14329).
EY was partly supported by
JSPS  KAKENHI Grant-in-Aid for Scientific Research (A)  (No.~JP17H01095).


\end{document}